\def\C{{\mathbf C}}
\def\R{{\mathbf R}}
\def\Z{{\mathbf Z}}
\def\Q{{\mathbf Q}}
\def\A{{\mathbf A}}
\def\g{{\mathfrak g}}
\def\O{{\mathbb O}}
\newtheorem{theorem}{Theorem}[section]
\newtheorem{lemma}[theorem]{Lemma}
\newtheorem{corollary}[theorem]{Corollary}
\theoremstyle{definition}
\theoremstyle{remark}
\newtheorem{remark}[theorem]{Remark}
\newcommand{\mm}[4]{\left(\begin{smallmatrix} #1 & #2\\ #3 & #4\end{smallmatrix}\right)}
\DeclareMathOperator{\tr}{tr}
\DeclareMathOperator{\SO}{SO}
\DeclareMathOperator{\Sp}{Sp}
\DeclareMathOperator{\SU}{SU}
\DeclareMathOperator{\SL}{SL}
\DeclareMathOperator{\GL}{GL}
\DeclareMathOperator{\PGL}{PGL}
\begin{document}
\title{Computation of Fourier coefficients of automorphic forms of type $G_2$}
\author{Aaron Pollack}
\address{Department of Mathematics\\ University of California San Diego\\ La Jolla, CA USA}
\email{apollack@ucsd.edu}
\thanks{Funding information: AP has been supported by the NSF via grant numbers 2101888 and 2144021.}

\begin{abstract}In a recent work, we found formulas for the Fourier coefficients of automorphic forms of type $G_2$: holomorphic Siegel modular forms on $\Sp_6$ that are theta lifts from $G_2^c$, and cuspidal quaternionic modular forms on split $G_2$.  We have implemented these formulas in the mathematical software SAGE. In this paper, we explain the formulas of our recent paper and the SAGE implementation.  We also deduce some theoretical consequences of our SAGE computations.\end{abstract}
\maketitle

\setcounter{tocdepth}{1}
\tableofcontents
\section{Introduction} 
The purpose of this paper is to give computations of Fourier coefficients of automorphic forms of type $G_2$.  The automorphic forms we compute come in two flavors.  First, there are vector-valued holomorphic Siegel modular forms on $\Sp_6$, that are exceptional theta lifts from algebraic modular forms for the group $G_2^c$.  Here $G_2^c$ is a group of type $G_2$ that is split at every finite place and for which $G_2^c(\R)$ is compact.  The second sort of automorphic forms we work with are quaternionic modular forms on \emph{split} $G_2$.  These arise as exceptional theta lifts form algebraic modular forms for the group $F_4^c$, which is a group of type $F_4$ that is split at every finite place and for which $F_4^c(\R)$ is compact.

That it is possible to compute exactly the Fourier coefficients of these exceptional theta lifts is a consequence of the results of \cite{pollackETF}.  In an accompanying SAGE \cite{sagemath} file \cite{pollackGitHub}, we have implemented the formulas of \cite{pollackETF} in the case of level one, so that these Fourier coefficients are exactly determinable on a computer.  The resulting computer calculations are not only satisfying--for example, they give a partial simultaneous check on the formulas of \cite{dalal} and \cite{pollackETF}--but they also have theoretical consequences.

We now say a bit more about some theoretical consequences of these calculations, deferring a discussion of the results of \cite{pollackETF} and the SAGE computations to later sections.
\subsection{Siegel modular forms of genus three}
Let $\Sp_{2n}$ denote the symplectic group, consisting of matrices $g$ with $g \mm{0_n}{1_n}{-1_n}{0_n} g^t = \mm{0_n}{1_n}{-1_n}{0_n}$.  Recall that holomorphic Siegel modular forms are a certain type of automorphic forms for this group.  Explicitly, set $S_n$ the $n \times n$ symmetric matrices, and denote by $\mathcal{H}_n = \{Z= X + iY : X, Y \in S_n(\R), Y > 0\}$ the so-called Siegel upper half-space of degree $n$.  Here $Y > 0$ means that $Y$ is positive-definite.  The group $\Sp_{2n}(\R)$ acts on $\mathcal{H}_n$ via the formulas $\mm{a}{b}{c}{d} Z = (aZ+b)(cZ+d)^{-1}$.  If $g = \mm{a}{b}{c}{d} \in \Sp_{2n}(\R)$, and $Z \in \mathcal{H}_n$, set $J(g,Z) = cZ+d$, which lives in $\GL_n(\C)$.   Let $(\rho,V)$ be a finite-dimensional algebraic representation of $\GL_n(\C)$.  If $\Gamma \subseteq \Sp_{2n}(\Z)$ is a congruence subgroup, a holomorphic Siegel modular form of weight $\rho$ and level $\Gamma$ is a holomorphic function $f: \mathcal{H}_n \rightarrow V$ satisfying $f(\gamma Z) = \rho(J(\gamma,Z)) f(Z)$ for all $\gamma \in \Gamma$; one also imposes a moderate growth condition.

If $\Gamma = \Sp_{2n}(\Z)$, we say that $f$ has level one.  Holomorphic Siegel modular forms have a Fourier expansion, which we explicate in the level one case.  Denote $S_n(\Z)^\vee$ the half-integral symmetric $n \times n$ matrices. I.e., $T \in S_n(\Q)$ is in $S_n(\Z)^\vee$ if the diagonal entries of $T$ are integers, and the off-diagonal entries are integers divided by $2$.  If $f$ is a level one Siegel modular form of weight $(\rho,V)$, then one can write $f(Z) = \sum_{T \in S_n(\Z)^\vee: T \geq 0}{ a_f(T) e^{2\pi i (T,Z)}}$.  Here $T \geq 0$ means that $T$ is positive semi-definite, $(T,Z) = \tr(TZ) \in \C$, and $a_f(T) \in V$.  The vectors $a_f(T)$ are called the Fourier coefficients of $f$.  It is known \cite{ibukiyama} that, given $\rho$, there exists an explicitly determinable finite set $C_\rho$ of half-integral symmetric matrices, so that if $f$ is a level one Siegel modular form of weight $\rho$ and $a_f(T) = 0$ for all $T \in C_\rho$, then $f = 0$.  Consequently, computing finitely many Fourier coefficients of a level one Siegel modular form completely determines it.

Recall that the dual group of $\Sp_6$ is $\SO_7(\C)$, which receives a map from $G_2(\C)$, the dual group of $G_2$.  Langlands functoriality thus predicts a lifting of automorphic representations on groups of type $G_2$ to automorphic representations of $\Sp_6$.  One of the results of \cite{pollackETF}, combined with work of Gross-Savin \cite{grossSavin}, Magaard-Savin \cite{magaardSavin}, and Gan-Savin \cite{ganSavinLLC}, allows one a way to computationally and provably produce instances of this lift.  We have implemented this computation in SAGE, and one consequence of the SAGE computations is the following theorem.

To setup the theorem, recall that Chenevier-Taibi \cite{chenevierTaibi} have computed the dimension of the space of level one holomorphic Siegel modular forms of various weights.  The following table is built from their computations.

\[ \begin{array}{|c|c|c|} \hline (k_1,k_2) & \lambda = (k_1+2k_2+4, k_1+k_2+4, k_2+4) & m(\lambda) \\ \hline
(0,4) & (12,8,8) & 1\\ (2,4) & (14,10,8) & 1\\ (3,3) & (13,10,7) & 1\\ (0,6) & (16,10,10) & 2\\ (3,4) & (15,11,8) & 1\\ (6,2) & (14,12,6) & 1\\ (5,3) & (15,12,7) & 1 \\ (7,2) & (15,13,6) & 1 \\ (9,1) & (15,14,5) & 1 \\ (6,3) & (16,13,7) & 2 \\ (8,2) & (16,14,6) & 2 \\ \hline  \end{array} \]

In the table, $m(\lambda)$ denotes the dimension of the space of level one cuspidal Siegel modular forms for $\Sp_6$ of weight $\lambda$.  The parameter $(k_1, k_2)$ has to do with algebraic modular forms on $G_2^c$, and will be explained in section \ref{sec:Siegel}
\begin{theorem}\label{thm:Sp6} If $\lambda$ is in the above table, then every level one cuspidal eigenform of weight $\lambda$ has all Satake parameters in $G_2(\C)$.
\end{theorem}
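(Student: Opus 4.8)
The plan is to prove Theorem \ref{thm:Sp6} by a dimension count: I would produce, for each $\lambda$ in the table, a space of level one cuspidal Siegel modular forms of weight $\lambda$ all of whose members are exceptional theta lifts from $G_2^c$ (and hence have Satake parameters in $G_2(\C)$), and then show that this space is \emph{all} of the cuspidal forms of weight $\lambda$ by matching its dimension against $m(\lambda)$. The two numbers to compare are thus $m(\lambda) = \dim S_\lambda$, where $S_\lambda$ is the space of level one cuspidal Siegel modular forms of weight $\lambda$, supplied by the Chenevier--Taibi computation \cite{chenevierTaibi} recorded in the table, and the dimension of the subspace $\Theta_\lambda \subseteq S_\lambda$ cut out by the theta lifts.

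To describe $\Theta_\lambda$ I would use the $G_2^c$ side. By Dalal's formula \cite{dalal}, the parameter $(k_1,k_2)$ determines a space of algebraic modular forms on $G_2^c$, with a basis of Hecke eigenforms $\pi_1,\dots,\pi_r$. For each $\pi_i$ the results of \cite{pollackETF}, combined with the exceptional theta correspondence of Gross--Savin \cite{grossSavin}, Magaard--Savin \cite{magaardSavin}, and Gan--Savin \cite{ganSavinLLC}, attach a (possibly zero) holomorphic Siegel cuspidal Hecke eigenform $\Theta(\pi_i)$ of weight $\lambda$. The construction is functorial for the embedding $G_2(\C)\hookrightarrow \SO_7(\C)$ of dual groups, so \emph{by construction} the Satake parameters of every nonzero $\Theta(\pi_i)$ lie in $G_2(\C)$. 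It therefore suffices to show that the nonzero lifts span all of $S_\lambda$.

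The essential input from SAGE is a certificate of nonvanishing and linear independence, made effective by the explicit Fourier-coefficient formulas of \cite{pollackETF}. Fix a finite determining set $C_\lambda$ of half-integral symmetric matrices, in the sense of Ibukiyama \cite{ibukiyama}, so that a level one weight-$\lambda$ form is zero precisely when all its Fourier coefficients $a_f(T)$ for $T\in C_\lambda$ vanish. For each $\pi_i$ I would compute the tuple $\bigl(a_{\Theta(\pi_i)}(T)\bigr)_{T\in C_\lambda}$ and verify, first, that it is nonzero, so that $\Theta(\pi_i)\neq 0$, and second, that across the surviving $\pi_i$ these tuples are linearly independent. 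Because $C_\lambda$ is determining, this certifies that the lifts span a subspace of $S_\lambda$ of dimension exactly equal to the number of nonzero $\Theta(\pi_i)$; reading the table, I would confirm this number equals $m(\lambda)$, exhibiting in particular two independent lifts in the multiplicity-two cases $(0,6)$, $(6,3)$, and $(8,2)$. Since $\dim S_\lambda=m(\lambda)$, equality of dimensions forces $\Theta_\lambda=S_\lambda$, and hence every cuspidal eigenform of weight $\lambda$ lies in the theta image and has Satake parameters in $G_2(\C)$.

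The main obstacle is precisely the nonvanishing and independence of the lifts: a priori some $\Theta(\pi_i)$ could vanish or two lifts could coincide, in which case the dimension of $\Theta_\lambda$ would fall short of $m(\lambda)$ and the comparison would not close. This is exactly the point at which the provable, exact Fourier-coefficient calculations of \cite{pollackETF} in SAGE are indispensable: they convert an otherwise delicate nonvanishing statement into a finite, certifiable linear-algebra check over the determining set $C_\lambda$. A secondary point, handled automatically once $\dim\Theta_\lambda=m(\lambda)$, is that no cuspidal eigenform of weight $\lambda$ can arise from a source outside the $G_2^c$-theta image, since saturating the full dimension leaves no room for any additional eigensystem.
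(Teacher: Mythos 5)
Your overall strategy is the same as the paper's: saturate the dimension $m(\lambda)$ of the cusp form space with exceptional theta lifts certified by exact Fourier coefficient computations, then invoke the Gross--Savin/Magaard--Savin/Gan--Savin results to place the Satake parameters in $G_2(\C)$. But your execution differs in two ways that are worth comparing, and contains one citation error. First, you certify nonvanishing and independence by computing all Fourier coefficients over an Ibukiyama determining set $C_\lambda$; the paper does something cheaper and slicker: it computes the Fourier coefficient at a \emph{single} matrix $T = \frac{1}{2}\left(\begin{smallmatrix} 2&1&1\\1&2&1\\1&1&2\end{smallmatrix}\right)$ for a few lifts, and uses the elementary fact that the image of the linear map $F \mapsto a_F(T)$ (valued in the representation $V$, so these coefficients are vector-valued polynomials) lower-bounds the dimension of the span of the lifts. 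No determining set is needed to prove a \emph{lower} bound on dimension; $C_\lambda$ is only needed if one wants to prove a form is zero. Second, you organize the computation around theta lifts $\Theta(\pi_i)$ of individual Hecke eigenforms on $G_2^c$; this is a practical obstruction, since the explicit formulas of the paper (Theorem \ref{thm:sp6}) compute the functions $F^{u,v}_{k_1,k_2}$ attached to null pairs $(u,v)$, not lifts of eigenforms, and producing the latter would require separately computing and diagonalizing the Hecke action on algebraic modular forms for $G_2^c$. The paper instead works directly with the span $S^\Theta_{k_1,k_2}$ of the $F^{u,v}_{k_1,k_2}$, shows it has dimension at least $m(\lambda)$, hence equals the cusp form space, and then applies parts (1) and (3) of Corollary \ref{cor:sp6} to every eigenform in that space --- no eigenform ever needs to be exhibited. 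Finally, the dimension of the space of algebraic modular forms on $G_2^c$ of weight $(k_1,k_2)$, and the equality $m(k_1,k_2)=m(\lambda(k_1,k_2))$, come from Chenevier--Renard (combined with Chenevier--Taibi), not from Dalal, whose formula concerns quaternionic modular forms on \emph{split} $G_2$ and plays no role in this theorem; this miscitation does not break your logic (the computation itself certifies the lower bound), but it should be corrected.
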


\subsection{Quaternionic modular forms on split $G_2$}
While the paper \cite{pollackETF} discusses the holomorphic Siegel modular forms of genus three, its main results concern the (quaternionic) modular forms on split $G_2$. Recall that the split group $G_2$ of this Dynkin type does not have a symmetric space $G_2(\R)/K$ with a $G_2(\R)$-invariant complex structure.  Thus, there is no notion of holomorphic modular forms on this group.   

A great replacement for the holomorphic modular forms was found by Gross-Wallach \cite{grossWallachII} and Gan-Gross-Savin \cite{ganGrossSavin}.  To briefly describe these objects, let us begin by recalling that the maximal compact subgroup $K$ of $G_2(\R)$ is $(\SU(2) \times \SU(2))/\mu_2$, where the first $\SU(2)$ is the so-called long root $\SU(2)$, and the second is the short root $\SU(2)$.  Forgetting the second $\SU(2)$ factor, one has a surjection $K \rightarrow \SU(2)/\mu_2 \simeq \SO(3)$.  For a positive integer $\ell$, let $\mathbf{V}_{\ell}$ be the irreducible complex representation of $K$ that is the pull-back of the $2\ell+1$ irreducible dimensional representation of $\SO(3)$.

If $\Gamma \subseteq G_2(\R)$ is a congruence subgroup, a level $\Gamma$ quaternionic modular form on $G_2$ of weight $\ell$ is a smooth function $\varphi: \Gamma \backslash G_2(\R) \rightarrow \mathbf{V}_{\ell}$ of moderate growth satisfying
\begin{enumerate}
\item $\varphi(gk) = k^{-1} \varphi(g)$ for all $k \in K$;
\item $\mathcal{D}_{\ell} \varphi \equiv 0$, for a certain linear, first-order differential operator $\mathcal{D}_{\ell}$.
\end{enumerate}
The modular form $\varphi$ is cuspidal if and only if $\varphi$ is bounded.  We say $\varphi$ is of level one if $\Gamma = G_2(\Z)$.

The quaternionic modular forms of weight $\ell$ have a Fourier expansion, similar in spirit to the Fourier expansion of holomorphic Siegel modular forms.  We explicate this Fourier expansion in the level one case: For every real binary cubic $f(u,v) = au^3 + bu^2v + cuv^2 + dv^3$, there is a completely explicit moderate growth function (defined in terms of $K$-Bessel functions) $W_{f,\ell}: G_2(\R) \rightarrow \mathbf{V}_{\ell}$ satisfying properties (1), (2) of the definition of quaternionic modular forms of weight $\ell$.  The function $W_{f,\ell}(g)$ is $0$ if the discrimant of the cubic $f$ is negative.  And, if $\varphi$ is a level one cuspidal quaternionic modular form of weight $\ell$, then $\varphi_Z(g) = \sum_{f \text{ integral}}{a_\varphi(f) W_f(g)}$.  Here $\varphi_Z(g)$ is a certain compact integral transform of $\varphi$, which uniquely determines it, and the $a_\varphi(f)$ are complex numbers, called the Fourier coefficients of $\varphi$.

While \emph{a priori} the Fourier coefficients of cuspidal quaternionic modular could be transcendental numbers, the main result of \cite{pollackETF} is that if $\ell \geq 6$ is even, then there is a basis of the space of level $\Gamma$, weight $\ell$ cuspidal quaternionic modular forms whose Fourier coefficients all lie in the cyclotomic extension of $\Q$.  Moreover, the proof is constructive, giving the exact Fourier expansion of such $G_2$-cusp forms in an explicitly computable way.  We have implemented these explicit formulas of \cite{pollackETF} in the case of level one forms on $G_2$.  This allows one to compute finitely many Fourier coefficients of many level one cusp forms on $G_2$.

Recall that, associated to every integral binary cubic form $f$ is a cubic ring $S_f$.  If $f_1, f_2$ are in the same $\GL_2(\Z)$ orbit, then $S_{f_1}$ is isomorphic to $S_{f_2}$.  It follows immediately from the proof of this correspondence that, associated to $f$ is in fact also an orientation of $S_f /\Z$, by which we mean a generator of $\wedge^2(S_f/\Z)$.  If $f_1$ and $f_2$ are in the same $\SL_2(\Z)$-orbit, then $S_{f_1}$ is isomorphic to $S_{f_2}$ as oriented cubic rings.  Now, it is an easy consequence of the existence of the Fourier expansion of quaternionic modular forms, that if $f_1 = g \cdot f_2 = \det(g)^{-1}f_2((u,v)g)$ for $g \in \GL_2(\Z)$, then then $a_\varphi(f_1) = \det(g)^{\ell} a_{\varphi}(f_2)$.  Here the integer $\ell$ is the weight of $\varphi$.  Thus, if $S$ is an oriented cubic ring, we can write $a_\varphi(S)$ for the associated Fourier coefficient.

Here is one theoretical consequence of our ability to compute finitely many Fourier coefficients of some cuspidal level one quaternionic modular forms on $G_2$.  Recall that Dalal \cite{dalal} has recently given an explicit formula for the dimension of the space $S_{\ell}(G_2(\Z))$ of level one cuspidal quaternionic modular forms of weight $\ell \geq 3$.  It follows from his formulas that  $S_{9}(G_2(\Z))$ and $S_{11}(G_2(\Z))$ are each one-dimensional.  Let $F_9, F_{11}$ denote the eigenforms spanning these spaces.

\begin{theorem}\label{thm:QMF} The eigenforms $F_9$ and $F_{11}$ can be normalized to have all Fourier coefficients in $\Z$, and moreover, all Fourier coefficients $a_f(S)$ for (oriented) cubic rings $S$ of the form $\Z \times B$ are $0$.\end{theorem}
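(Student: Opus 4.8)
The plan is to treat the two assertions separately. The vanishing of $a_\varphi(\Z\times B)$ will follow from a structural symmetry argument that uses only that the weight is odd, whereas the integrality will be extracted from the explicit Fourier coefficient formula of \cite{pollackETF} together with the SAGE computation.

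First I would dispose of the vanishing. The key point is that every oriented cubic ring $S = \Z \times B$, with $B$ a quadratic ring, carries an orientation-reversing automorphism. Writing $B = \Z \oplus \Z\omega$ with conjugation $\sigma(\omega) = t - \omega$ for $t \in \Z$, the map $\tau = \mathrm{id}_{\Z} \times \sigma$ is a ring automorphism of $S$, and a direct computation on the rank-two module $S/(\Z\cdot 1)$, using the images of $(0,1)$ and $(0,\omega)$ as a basis, shows that $\tau$ acts through a matrix of determinant $-1$ and hence reverses the orientation. Under the Delone-Faddeev correspondence this automorphism is realized by an element $g \in \GL_2(\Z)$ with $\det(g) = -1$ that stabilizes the associated binary cubic form $f$. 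The transformation law $a_\varphi(g\cdot f) = \det(g)^{\ell} a_\varphi(f)$ recalled in the introduction then yields $a_\varphi(f) = (-1)^{\ell} a_\varphi(f)$, and since $\ell \in \{9,11\}$ is odd this forces $a_\varphi(\Z\times B) = 0$. In fact this shows that \emph{every} odd-weight level one cuspidal quaternionic modular form vanishes on all rings $\Z\times B$, independently of the dimension of the ambient space.

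For the integrality I would work through the theta lift. Since $S_9(G_2(\Z))$ and $S_{11}(G_2(\Z))$ are one-dimensional and the exceptional theta lift from $F_4^c$ produces quaternionic eigenforms of these weights, it suffices to pin down the algebraic modular eigenform $\varphi_\ell$ on $F_4^c$ of the matching weight whose lift is nonzero; I would locate it by computing the relevant finite-dimensional space of algebraic modular forms on $F_4^c$ and diagonalizing the Hecke action. After normalizing $\varphi_\ell$ to take values in a fixed integral lattice, I would substitute it into the coefficient formula of \cite{pollackETF}. Because the Fourier coefficients of the exceptional theta kernel along the $G_2$ unipotent are cyclotomic with controlled denominators, and because one-dimensionality forces the Hecke eigenvalues, and hence the whole collection of $a_\varphi(S)$, to be Galois-stable and therefore rational, the formula outputs values in $\frac{1}{N}\Z$ for a single $N$; clearing this denominator gives the asserted normalization. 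The SAGE implementation of \cite{pollackETF} would be used both to confirm that the lift is nonzero, so that $F_\ell$ really is this lift, and to exhibit the explicit integral coefficients.

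The main obstacle I anticipate is the integrality, the vanishing being essentially automatic. Two points require care: verifying that the exceptional theta lift of $\varphi_\ell$ does not vanish, so that $F_9$ and $F_{11}$ are genuinely accessible through the formula; and controlling the denominators uniformly in $S$. The latter is the crux, since there are infinitely many rings $S$ and one must argue from the integral structure of the formula---the integrality of the input form on $F_4^c$ and of the theta kernel coefficients---rather than from any finite computation. Finally, establishing rationality in the odd-weight case, where the cyclotomic rationality theorem of \cite{pollackETF} is stated for even weight $\ell \geq 6$, is a gap to fill, which I expect to follow from the same Galois-stability argument applied to the unique eigenform.
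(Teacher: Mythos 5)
Your vanishing argument is correct and is genuinely different from the paper's. The paper first identifies $F_9$ and $F_{11}$ with the lifts $\Theta_E(X_E,Y_E;m)$, $m\in\{5,7\}$ (a SAGE computation exhibits a nonzero Fourier coefficient, and Dalal's dimension formula then forces the identification), and then quotes \cite[Proposition 5.5]{elkiesGrossIMRN}: a structural fact about the pair $(J_R,E)$ which implies that \emph{every} $\Theta_E$-lift, of any weight, has vanishing Fourier coefficients at rings $\Z\times B$. You instead use the orientation-reversing automorphism $\mathrm{id}\times\sigma$ of $\Z\times B$ (your determinant computation on $S/\Z$ is right), which under the functorial Delone--Faddeev correspondence produces $g\in\GL_2(\Z)$ with $\det(g)=-1$ stabilizing the associated cubic form $f$, whence $a_\varphi(f)=(-1)^\ell a_\varphi(f)=0$ for odd $\ell$ by the transformation law stated in the introduction. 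Both arguments are complete; yours is stronger in one direction (every level one quaternionic modular form of odd weight kills $\Z\times B$, lift or not), the paper's in another (all $\Theta_E$-lifts, of either parity of weight).

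On integrality, your skeleton coincides with the paper's --- SAGE nonvanishing plus one-dimensionality from \cite{dalal} identifies $F_9,F_{11}$ as exceptional theta lifts --- but the way you then extract integrality has a genuine gap. The paper needs no Galois or denominator argument: it cites the result of \cite{pollackETF} that, for each $m\geq 1$, the span of the $\Theta_I(X,Y;m)$ and $\Theta_E(X_E,Y_E;m)$ has a basis of cusp forms with \emph{integral} Fourier coefficients; since that span is nonzero and sits inside a one-dimensional space, the normalization claim is immediate. Note that your worry about the even-weight hypothesis concerns a different theorem of \cite{pollackETF} (the cyclotomic statement for the \emph{full} space of cusp forms of even weight $\ell\geq 6$); the statement actually needed is about the span of the lifts and carries no parity restriction. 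As written, your substitute argument is incomplete at exactly the points you flag: ``Galois stability'' of the coefficients requires knowing that the coefficientwise Galois conjugate of a quaternionic modular form is again one, and there is no q-expansion principle available here a priori (this can be rescued by observing that $P_{m,E}(w;X_E,Y_E)$ is polynomial in the entries of the singular pair, so conjugating the pair conjugates the lift); and the ``controlled denominators,'' uniform over infinitely many $w$, is precisely the content of the integral-basis result, not something lighter. Finally, locating a Hecke eigenform on $F_4^c$ and diagonalizing the Hecke action is unnecessary: the formulas of Theorems \ref{thm:ThetaI} and \ref{thm:ThetaE} take an arbitrary singular pair as input, which is all the paper uses.
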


The latter part of Theorem \ref{thm:QMF} gives some evidence for a conjecture of Gross \cite{liChao}, as we now explain.  For every level one cuspidal holomorphic modular form $f$ on $\PGL_2$ of weight $2k$, the Arthur multiplicity conjecture predicts the existence of a cuspidal lift $F_f$ to $G_2$, which is a level one cuspidal quaternionic modular form of weight $k$.  Suppose now that $E$ is a totally real etale cubic algebra, with maximal order $\mathcal{O}_E$.  Gross has suggested that square Fourier coefficient $a_{F_f}(\mathcal{O}_E)^2$ should be related to the central $L$-value $L(f \otimes V_E, 1/2)$, where $V_E$ is the two-dimensional motive attached to $E$, i.e., where $\zeta_E(s) = \zeta(s) L(V_E,s)$.  Suppose now that $E = \Q \times F$ with $F$ real quadratic.  Then $L(f \otimes V_E,s)$ factors as $L(f,s) L(f \otimes \epsilon_F,s)$ for a quadratic character $\epsilon_F$.  Hence if $L(f,1/2) = 0$, then $L(f \otimes V_E,1/2) = 0$.  But when $k$ is odd, the central value $L(f,1/2)$ is indeed $0$, so Gross' conjecture would predict $a_{F_f}(\Z \times B) = 0$ for such $f$.  Thus Theorem \ref{thm:QMF} gives some evidence for this conjecture.

\begin{remark} Because the application of our computation of Fourier coefficients is to (conjectural) lifts from $\PGL_2$, the reader might be concerned that we are only able to compute Fourier coefficients of ``small" automorphic representations on $G_2$.  However, this is not the case.  The quaternionic modular forms on $G_2$ contain those automorphic forms that sit as the minimal $K$-type in automorphic representations $\pi = \pi_f \otimes \pi_\infty$, with $\pi_\infty$ a quaternionic discrete series.  Thus, most of these eigenforms should sit in generic $L$-packets.\end{remark}

\subsection{Acknowledgements} We thank Gaetan Chenevier for fruitful exchanges related to our computations and Chao Li for explaining to us Gross's conjecture \cite{liChao}.  It is also a pleasure to thank Wee Teck Gan, Dick Gross, and Gordan Savin for inspirational mathematics upon which these computations are built.

\section{Exceptional algebra}
We explain in this section a little bit of exceptional algebra, both ``theoretically" and ``computationally".  The results in later sections depend upon this algebra.

\subsection{Octonions}
We begin by recalling the octonions $\O$ with positive-definite norm form.  These are an $8$-dimensional $\Q$-vector space, with a law of composition $\O \times \O \rightarrow \O$ that is bilinear, but neither commutative nor associative.  There is an element $1$ with $1 \cdot x = x \cdot 1 = x$ for all $x \in \O$.  Moreover, there is a positive definite quadratic form $n_{\O}: \O \rightarrow \Q$ that satisfies $n_{\O}(xy) = n_{\O}(x) n_{\O}(y)$ for all $x, y \in \O$.  Let $(\,,\,)$ denote the symmetric bilinear form induced by $n_{\O}$, so that $(x,y) = n_{\O}(x+y)-n_{\O}(x)-n_{\O}(y)$ for all $x,y \in \O$.  Let $V_7 \subseteq \O$ denote the orthogonal complement of $\Q \cdot 1$.  Define an involution, $*$, on $\O$ so that $1^* = 1$ and $x^* = -x$ if $x \in V_7$.  Then $xx^* = x^* x = n_{\O}(x)$ for all $x \in \O$; $(xy)^* = y^* x^*$; and $x + x^* = \tr_{\O}(x) 1$ for an element $\tr_{\O}(x) \in \Q$.  For $z_1, z_2, z_3 \in \O$, define $(z_1, z_2, z_3) = \tr_{\O}(z_1 (z_2 z_3))$.  It turns out that this quantity is equal to $\tr_{\O}((z_1 z_2) z_3)$, and that one has $(z_1, z_2, z_3) = (z_2, z_3, z_1) = (z_3, z_1, z_2)$.

One way to create $\O$ is using quaternion algebras, as follows.  Let $H$ be an arbitrary quaternion $\Q$-algebra, which is ramified at infinity.  Let $\gamma \in \Q^\times$ be negative.  Define $\O = H \oplus H$, with multiplication $(x_1, y_1) (x_2, y_2) = (x_1 x_2 + \gamma y_2^* y_1, y_2 x_1 + y_1 x_2^*)$.  The norm is $n_{\O}((x,y)) = n_H(x) - \gamma n_{H}(y)$, the trace is $\tr_{\O}((x,y)) = \tr_{H}(x)$, and the involution is $(x,y)^* = (x^*, -y)$.  Varying $H$ and $\gamma$, it turns out, gives isomorphic data $(\O,n_{\O}, 1)$, which is why we have dropped $H$ and $\gamma$ from the notation of $\O$.  From now on, we take $H$ to be Hamilton's quaternions, i.e., the unique quaternion $\Q$-algebra ramified at $2$ and $\infty$, with basis $\{1,i,j,k\}$ satisfying $i^2 = j^2 = k^2 = -1$ and $ij = k$.  We take $\gamma = -1$.

There is a maximal $\Z$-order in $\O$, called Coxter's ring of integral octonions \cite{coxeter}.  We denote this ring by $R$.  To construct it, set $e = (0,1)$ and $h =
\frac{1}{2}(i+j+k+e)$. Then, the following are a $\Z$ basis of $R$: $jh, e, -h, j, ih, 1, eh, ke.$ These are the simple roots of the $E_8$ root lattice, with $jh$ the
extended node, $1$ the branch vertex, and $e, -h, j, ih, 1, eh, ke$ going along longways.

The group $G_2^c$ is defined as the algebraic $\Q$-group of linear automorphisms of $\O$ that fixes $1$ and respects the multiplication.  Its Lie algebra can be identified with the kernel of the map $\wedge^2 V_7 \rightarrow V_7$ given by $x \wedge y \mapsto xy-yx$; see \cite{pollackG2} for an explicit basis. An element of $\wedge^2 V_7$ acts on $V_7$ via the formula $x \wedge y (z) = (y,z) x - (x,z) y$; this gives the Lie algebra action of $\g_2 = Lie(G_2^c)$ on $V_7$.  

Base-changing the group $G_2^c$ to $K$, it becomes split.  It is helpful to have a different basis of $\O \otimes K$, in which it is easy to write down nilpotent elements of $\g_2 \otimes K$.  We define
\begin{itemize}
	\item $e_2 = \frac{1}{2}((0,1)-\sqrt{-1}(0,i))$
	\item $e_3^* = \frac{1}{2}((0,j)-\sqrt{-1}(0,k))$
	\item $e_3 = \frac{1}{2}((0,-j)-\sqrt{-1}(0,k))$.
	\item $e_2^* = \frac{1}{2}((0,-1)-\sqrt{-1}(0,i))$
	\item $\epsilon_1 = \frac{1}{2}((1,0)-\sqrt{-1}(i,0))$
	\item $\epsilon_2 = \frac{1}{2}((1,0)+\sqrt{-1}(i,0))$
	\item $e_1 = \frac{1}{2}((j,0)-\sqrt{-1}(k,0))$
	\item $e_1^* = \frac{1}{2}((-j,0)-\sqrt{-1}(k,0))$
\end{itemize}
This basis realizes $\O \otimes K$ as a split quadratic space: All the above basis elements are isotropic, and one has $(\epsilon_1, \epsilon_2) = 1$, $(e_i, e_j^*) = - \delta_{ij}$, and $(\epsilon_i, e_j) = (\epsilon_i, e_j^*) = 0$ for all $i,j$.

One can define elements of $G_2^c(K)$ by exponentiating nilpotent Lie algebra elements.  For SAGE computation below, we will use the following nilpotent elements.
\begin{lemma} \label{lem:nilg2} The elements $u = e_3^*$ and $v= e_1$ form a null pair.  Let $B$ be the Borel of $G_{2,K}^c$ stabilizing the flag $K e_3^* \subseteq K e_3^* + K e_1$.  Then the nilpotent $\overline{\mathfrak{n}}$ in $\g_2 \otimes K$ opposite to $B$ is spanned by the following six elements: 
\begin{enumerate}
\item $\ell_1 = e_1^* \wedge e_2$,
\item $\ell_2 = (\epsilon_1 - \epsilon_2) \wedge e_2^* + e_3 \wedge e_1$
\item $\ell_3 = [\ell_1,\ell_2]$
\item $\ell_4 = [\ell_3,\ell_2]/2$
\item $\ell_5 = [\ell_4,\ell_2]/3$
\item $\ell_6 = [\ell_5,\ell_1]$.
\end{enumerate}
\end{lemma}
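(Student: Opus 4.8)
The plan is to identify $\ell_1,\dots,\ell_6$ as a system of root vectors for the six negative roots of $G_{2,K}^c$ relative to $B$, and then to read off the opposite nilradical $\overline{\mathfrak n}$ directly from the $G_2$ root combinatorics. First I would verify that $u = e_3^*$ and $v = e_1$ form a null pair: from the stated relations $(e_i,e_j^*) = -\delta_{ij}$ and the isotropy of all split basis vectors, both $e_3^*$ and $e_1$ are isotropic and $(e_3^*,e_1)=0$, so $K e_3^* + K e_1$ is a $2$-dimensional isotropic subspace and the flag $K e_3^* \subseteq K e_3^* + K e_1$ is a complete isotropic flag in $V_7\otimes K$. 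This is precisely the data cutting out a Borel $B$ of the split group, and $\overline{\mathfrak n}$ is $6$-dimensional, matching the number of $\ell_i$.

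Next I would fix the maximal torus $T \subseteq B$ diagonalizing $\{e_i,e_i^*\}$ and fixing the zero weight vector $\epsilon_1 - \epsilon_2 = -\sqrt{-1}\,i$ of $V_7\otimes K$. The requirement that $T$ preserve the octonion product forces the weights of $e_1,e_2,e_3$ to sum to zero, so $e_1,e_2,e_3,e_1^*,e_2^*,e_3^*$ carry exactly the six short roots of $G_2$. Computing $T$-weights then shows that $\ell_1 = e_1^*\wedge e_2$ and $\ell_2 = (\epsilon_1-\epsilon_2)\wedge e_2^* + e_3\wedge e_1$ are each homogeneous (for $\ell_2$ this is where $wt(e_1)+wt(e_2)+wt(e_3)=0$ is used) and occupy the two negative simple root spaces, one long and one short. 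Before proceeding I must check that $\ell_1,\ell_2$ actually lie in $\g_2\otimes K = \ker(\wedge^2 V_7 \to V_7)$: for $\ell_1$ this is the identity $e_1^* e_2 = e_2 e_1^*$, and for $\ell_2$ it is a cancellation between its two wedge summands, both verified against the multiplication table of $\O\otimes K$ read off from the $H\oplus H$ model in the split basis.

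I would then compute the iterated brackets $\ell_3 = [\ell_1,\ell_2]$, $\ell_4 = [\ell_3,\ell_2]/2$, $\ell_5 = [\ell_4,\ell_2]/3$, $\ell_6 = [\ell_5,\ell_1]$. Since $\g_2$ is a subalgebra of $\mathfrak{so}(V_7)\cong \wedge^2 V_7$, all of these automatically remain in $\g_2$ once $\ell_1,\ell_2$ do, and they may be evaluated purely from the stated action $x\wedge y(z) = (y,z)x - (x,z)y$, whose induced commutator is
\[ [a\wedge b, c\wedge d] = (b,c)\,a\wedge d - (a,c)\,b\wedge d - (b,d)\,a\wedge c + (a,d)\,b\wedge c, \]
involving only the bilinear form. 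Tracking weights, with $\ell_2$ the short negative simple root $-\alpha$ and $\ell_1$ the long one $-\beta$, gives $\ell_3 \mapsto -\alpha-\beta$, $\ell_4 \mapsto -2\alpha-\beta$, $\ell_5 \mapsto -3\alpha-\beta$, $\ell_6 \mapsto -3\alpha-2\beta$, exhausting the remaining four negative roots. The length-four $\alpha$-string through $-\beta$ explains the normalizing factors $1/2$ and $1/3$ and shows each bracket is nonzero, while $-4\alpha-\beta$ and $-3\alpha-3\beta$ not being roots forces $[\ell_5,\ell_2]=0$ and $[\ell_6,\ell_1]=0$, so the chain terminates correctly. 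As the $\ell_i$ are nonzero vectors for six distinct negative roots, they are linearly independent, lie in $\overline{\mathfrak n}$, and hence span it.

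The genuinely delicate step is the membership check $\ell_1,\ell_2\in\g_2\otimes K$ together with pinning down the correct root normalization: this is the one place the octonion product itself, rather than just the bilinear form, is needed, and it requires making the split multiplication table explicit from the $H\oplus H$ construction. Once the two simple root vectors are correctly placed, the remaining brackets and the spanning statement are mechanical consequences of the commutator formula and the $G_2$ root system, so I expect the octonionic bookkeeping to be the main obstacle.
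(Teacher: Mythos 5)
Your overall strategy---place $\ell_1,\ell_2$ in the two negative simple root spaces, check they lie in $\g_2\otimes K$, and let the $G_2$ root combinatorics force everything else---is sound, and for what it is worth it is genuinely more self-contained than the paper, whose entire proof is a citation to Section 2.2 of \cite{pollackG2}. But your very first step contains a real error. You claim that $u=e_3^*$, $v=e_1$ form a null pair because they are isotropic and $(e_3^*,e_1)=0$. That inference is false. In this paper a null pair means $u^2=uv=vu=v^2=0$ as \emph{octonion products}. For trace-zero octonions the quadratic relation $x^2-\tr_{\O}(x)x+n_{\O}(x)=0$ does give $u^2=v^2=0$ from isotropy, but its linearization only gives $uv+vu=-(u,v)\cdot 1$, so orthogonality yields $uv=-vu$, not $uv=vu=0$. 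Concretely, $e_1$ and $e_2$ are isotropic and orthogonal, yet $e_1e_2=e_3^*\neq 0$: isotropic orthogonal pairs are generically \emph{not} null. The vanishing of the products $e_3^*e_1$ and $e_1e_3^*$ is a genuine computation in the $H\oplus H$ multiplication table (it does hold), and it is exactly the $G_2$-specific condition that cannot be seen from the quadratic form alone. This error propagates into your next claim, that the isotropic flag ``is precisely the data cutting out a Borel'': the stabilizer in $G_2$ of an isotropic flag $Ku\subseteq Ku+Kv$ is a Borel only when the plane is null; the $G_2$ flag variety parametrizes null flags, not arbitrary isotropic ones. So your closing remark that the membership check $\ell_1,\ell_2\in\g_2\otimes K$ is ``the one place the octonion product itself is needed'' is precisely where the proposal goes wrong---the product is needed twice, and you explicitly waved it away the first time.

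The gap is fixable with the same tools you already invoke: compute $e_3^*e_1$ and $e_1e_3^*$ from the split basis expressed in the $H\oplus H$ model (both vanish), which simultaneously justifies the null-pair claim and the assertion that the flag stabilizer is a Borel. After that, your weight bookkeeping is correct: with $wt(e_1)+wt(e_2)+wt(e_3)=0$, the element $\ell_1$ has the long weight $wt(e_2)-wt(e_1)$, $\ell_2$ is homogeneous of the short weight $-wt(e_2)$, neither stabilizes the flag (e.g.\ $\ell_2 e_3^*=e_1$), and the bracket chain sweeps out the remaining four negative roots, with nonvanishing guaranteed by $[\g_\gamma,\g_\delta]=\g_{\gamma+\delta}$ when $\gamma,\delta,\gamma+\delta$ are all roots. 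Six nonzero vectors of distinct negative weights then span the six-dimensional $\overline{\mathfrak{n}}$, as you say.
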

\begin{proof} This follows directly from \cite[Section 2.2]{pollackG2}.\end{proof}

\subsection{The exceptional cubic norm structure}\label{subsec:excAlgJ}
We now recall a bigger exceptional algebraic structure.  Namely, let $J = H_3(\O)$ be the $27$-dimensional $\Q$-vector space consisting of elements $X = \left(\begin{array}{ccc} c_1 & x_3 & x_2^* \\ x_3^* & c_2 & x_1 \\ x_2 & x_1^* & c_3 \end{array}\right)$ with $c_1, c_2, c_3 \in \Q$ and $x_1, x_2, x_3 \in \O$.  This $J$ is called the exceptional cubic norm structure.  The space $J$ comes equipped with a cubic norm $N_J: J \rightarrow \Q$ defined as $N_J(X) = c_1 c_2 c_3 - c_1 n_{\O}(x_1) - c_2 n_{\O}(x_2) - c_3 n_{\O}(x_3) + (x_1, x_2, x_3)$.

Let $(\,,\,,\,)_J$ be the unique symmetric trilinear form on $J$ satisfying $(X,X,X)_J = 6 N_J(X)$ for all $X \in J$.  For $X \in J$, let $X^{\#} \in J^\vee$ be the linear map given by $(Z,X^\#) = \frac{1}{2}(Z,X,X)$ for all $Z \in J$.  One says that $X$ has rank one if $X \neq 0$ but $X^\#  = 0$.  For $X,Y \in J$, one sets $X \times Y = (X+Y)^\# - X^\# - Y^\#$, then one has $(Z, X \times Y) = (Z,X,Y)$ for all $Z \in J$.  If $U \in J$ satisfies $N_J(U) = 1$, one defines a symmetric bilinear form on $J$ as $(X,Y)_U = (X,U^\#) (Y,U^\#) - (U, X, Y)$.

We will write elements $X$ of $J$ as $X = [c_1, c_2, c_3; x_1, x_2, x_3]$.  Set $I = [1,1,1;0,0,0]$, which has $N_J(I) = 1$.  Then if $X = [c_1, c_2, c_3; x_1, x_2, x_3]$ and $X' = [c_1', c_2', c_3'; x_1', x_2', x_3']$, then $(X,X')_I = c_1 c_1' + c_2 c_2' + c_3 c_3' + (x_1, x_1')_{\O} + (x_2, x_2')_{\O} + (x_3, x_3')_{\O}$.  Observe that this bilinear form is positive-definite.  It induces an identification $J \simeq J^\vee$, which we denote by $\iota$.  If $X, X'$ are as above, then

\begin{align*} \iota(X^\#) &=[c_2 c_3 - n(x_1), c_3 c_1 - n(x_2), c_1 c_2 - n(x_3); (x_2 x_3)^* - c_1 x_1, (x_3 x_1)^* - c_2 x_2, (x_1x_2)^* - c_3 x_3]\\
\iota(X \times X') &= [c_2 c_3'+c_2'c_3 - (x_1,x_1'), c_3 c_1'+c_3'c_1 - (x_2,x_2'), c_1' c_2+c_1c_2' - (x_3,x_3'); \\ &\,\,\, (x_2' x_3 +x_2 x_3')^*- c_1' x_1-c_1x_1', (x_3' x_1+x_3x_1')^* - c_2' x_2-c_2 x_2', (x_1' x_2+x_1x_2')^* - c_3' x_3-c_3x_3'].\end{align*}

An integral lattice in $J$ is the set $J_R$, consisting of those $X = [c_1, c_2, c_3; x_1, x_2, x_3]$ with $c_i \in \Z$ and $x_i \in R$.  Following \cite{elkiesGrossIMRN}, we distinguish two different quadratic forms on this lattice.  The first is $(\,,\,)_I$.  For the second, define $\beta = \frac{1}{2}(-1+i+j+k,1+i+j+k)$.  Then $\beta^2+\beta+2 = 0$, so that $\tr_{\O}(\beta) = -1$ and $n_{\O}(\beta) = 2$.  One sets $E = [2,2,2;\beta,\beta,\beta]$, so that $N_J(E) = 1$.  The second quadratic form on $J_R$ is $(\,,\,)_E$.  This form is again positive-definite \cite{elkiesGrossIMRN}.

Set $M_J^1$ to be the algebraic $\Q$-group of linear automorphisms of $J$ preserving the cubic norm.  It is a simply-connected group of type $E_6$.  There exists $\delta \in M_J^1(\Q)$ for which $\delta E = I$.  For SAGE computations, we will use an explicit choice of $\delta$.  To set up the result, if $\gamma \in J^\vee$ and $X \in J$, let $\Phi_{\gamma, x}': J \rightarrow J$ be defined as $\Phi_{\gamma,x}(z) = - \gamma \times (x \times z) + (\gamma,z) x + \frac{1}{3}(\gamma,x) z$.  Then $\Phi_{\gamma,x} \in Lie(M_J^1)$ \cite[Proposition 1.1]{PWZ}, \cite[Section 3.3]{pollackQDS}.

For $x, y, z \in \O$, set $V(x,y,z) = [0,0,0;x,y,z]$ and $V_1(x) = V(x,0,0)$, $V_2(y) = V(0,y,0)$ and $V_3(z) = V(0,0,z)$.
\begin{lemma} For $\delta \in M_J^1(\Q)$, one can take
\begin{align*} \delta &= \exp(\Phi'_{e_{22},V_1(-1)}) \exp(\Phi'_{V_1(3/2),e_{22}}) \exp(\Phi'_{e_{22},V_1(-1)}) \exp(\Phi'_{V_1(1),e_{22}})\\ &\times \exp(\Phi'_{e_{22},V_{1}(-\beta/2)}) \exp(\Phi'_{e_{11} V_2(-(\beta+1)/2)}) \exp(\Phi'_{V_3(-\beta/2),e_{11}}).\end{align*}
The $\Phi'_{\gamma,x}$ appearing in this product satisfy $(\Phi'_{\gamma,x})^3= 0$.
\end{lemma}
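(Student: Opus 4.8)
The plan is to establish the two assertions of the lemma separately. First, I would show that each exponent $\Phi'_{\gamma,x}$ occurring in the product is a nilpotent element of $Lie(M_J^1)$ with $(\Phi'_{\gamma,x})^3 = 0$, so that $\exp(\Phi'_{\gamma,x}) = 1 + \Phi'_{\gamma,x} + \tfrac{1}{2}(\Phi'_{\gamma,x})^2$ is a well-defined polynomial map. This also yields membership $\delta \in M_J^1(\Q)$: since $\Phi'_{\gamma,x} \in Lie(M_J^1)$ by \cite{PWZ}, \cite{pollackQDS}, the exponential of such a nilpotent element is a unipotent element of $M_J^1$, and all the data $\gamma, x$ (hence all matrix entries of each factor) are rational, so each factor, and therefore $\delta$, lies in $M_J^1(\Q)$. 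Second, I would verify the defining identity $\delta E = I$.

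For the nilpotency I would isolate the general fact that $(\Phi'_{\gamma,x})^3 = 0$ whenever one of $\gamma$ or $x$ is rank one, and then reduce to the single observation that in every one of the seven factors one argument is a diagonal idempotent $e_{11} = [1,0,0;0,0,0]$ or $e_{22} = [0,1,0;0,0,0]$, each of which satisfies $e_{11}^\# = e_{22}^\# = 0$ by a direct check from the formula for $X^\#$. The general fact is cleanest via the $\Z$-grading of $\g = Lie(M_J^1)$ induced by the rank-one idempotent: in each factor the non-idempotent argument ($V_1(\cdot)$, $V_2(\cdot)$, or $V_3(\cdot)$) lies in a single Peirce space of that idempotent, so $\Phi'_{\gamma,x}$ is homogeneous of nonzero degree for this grading, which has only finitely many (at most five) steps; hence its cube vanishes. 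Alternatively, for each of the seven concrete pairs one simply computes $(\Phi'_{\gamma,x})^2$ and $(\Phi'_{\gamma,x})^3$ from the definition $\Phi'_{\gamma,x}(z) = -\gamma \times (x \times z) + (\gamma,z)x + \tfrac{1}{3}(\gamma,x)z$ and the explicit cross-product formula recorded above, and reads off that the cube is zero.

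The heart of the lemma is the identity $\delta E = I$, which I would verify by direct computation, applying the seven factors to $E = [2,2,2;\beta,\beta,\beta]$ one at a time from right to left (the rightmost factor acting on $E$ first). Each application uses only the explicit formulas for $\times$, $(\,,\,)$, and $\iota$ above, together with the relations $\beta^2 + \beta + 2 = 0$, $\tr_{\O}(\beta) = -1$, and $n_{\O}(\beta) = 2$; the successive images are elements of $J$ with rational entries whose diagonal marches from $(2,2,2)$ toward $(1,1,1)$ while the octonionic entries $\beta$ are progressively cleared. The factors are organized accordingly, so that tracking these intermediate images terminates at $I = [1,1,1;0,0,0]$.

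The main obstacle is not conceptual but computational: the factorization of $\delta$ is engineered (obtained by reducing $E$ to $I$ through a chain of unipotent moves), so the proof is a forward verification rather than a derivation, and that verification is a long sequence of manipulations in the non-associative, non-commutative algebra $\O$, where the associator $(z_1,z_2,z_3)$ is nonzero and the cross product is genuinely octonionic. The bookkeeping is therefore lengthy and error-prone, which is precisely why it is cleanest to confirm on a computer; indeed it is checked in the accompanying SAGE implementation \cite{sagemath}, \cite{pollackGitHub}.
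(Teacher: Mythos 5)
Your overall route is the same as the paper's: the paper's own proof consists of (i) noting that the cube-vanishing of the exponents is a known fact (it simply cites \cite[Proposition 1.1]{PWZ}) and (ii) verifying the identity $\delta E = I$ by explicit computation in SAGE, which is exactly how you treat the heart of the lemma. Your additions---rationality of $\delta$, and a self-contained nilpotency argument in place of the citation---are reasonable supplements rather than a different strategy.

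However, the ``general fact'' you isolate is false as stated: $(\Phi'_{\gamma,x})^3 = 0$ does \emph{not} follow from one of $\gamma, x$ being rank one. Counterexample: take $\gamma = \iota(e_{11})$ and $x = e_{11}$ (both rank one). Since $e_{11} \times e_{11} = 2 e_{11}^{\#} = 0$ and $(\iota(e_{11}),e_{11}) = 1$, one computes $\Phi'_{\gamma,x}(e_{11}) = e_{11} + \tfrac{1}{3} e_{11} = \tfrac{4}{3} e_{11}$, so $\Phi'_{\gamma,x}$ has a nonzero eigenvalue and is not nilpotent at all; it is essentially the grading (Peirce) element, i.e.\ the semisimple part of the picture. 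What your grading argument actually proves---and what is true and suffices---is the statement with the additional hypothesis that the \emph{other} argument lies in the Peirce $\tfrac12$-space $J_{1/2}(e)$ of the rank-one idempotent $e$ (in particular is orthogonal to $e$, which kills the $\tfrac{1}{3}(\gamma,x)z$ term). That hypothesis does hold for all seven factors: $V_1(\cdot) \in J_{1/2}(e_{22})$ and $V_2(\cdot),\, V_3(\cdot) \in J_{1/2}(e_{11})$. A second, smaller slip: the counting should be run on $J$, not on $\g$. ``Homogeneous of nonzero degree in a five-step grading of $\g$'' by itself only yields fifth power zero; the correct point is that the induced grading on $J$ has exactly three steps (the Peirce weights), so an operator that is homogeneous of one step on $J$ has cube zero (and one of two steps has square zero). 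With the hypothesis and the counting corrected your argument is sound; alternatively, as you note and as the paper in effect does, one can simply check the seven factors and the identity $\delta E = I$ by machine computation.
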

\begin{proof} The fact that the $(\Phi'_{\gamma,x})^3= 0$ above follows from \cite[Proposition 1.1]{PWZ}.  That the above $\delta$ satisfies $\delta E = I$ can be verified by explicit computation in SAGE.\end{proof}

 Let $F_4^c$ be the algebraic subgroup of $M_J^1$ of elements that also fix $I$.  We define $J^0$ to be the subspace of elements $X \in J$ with $c_1 + c_2 + c_3 = 0$; equivalently, $J^0$ is the orthogonal complement to $I$ under the bilinear pairing $(\,,\,)_I$.  It is preserved by $F_4^c$.  There is a surjective $F_4^c$-equivariant map $\wedge^2 J^0 \rightarrow \mathfrak{f}_4$ from $\wedge^2 J^0$ to the adjoint representation of $F_4^c$. It is given by 
 \[X \wedge Y \mapsto \Phi_{X \wedge Y} :=  \Phi'_{\iota(X),Y} - \Phi'_{\iota(Y),X}.\]
 Let $V_1$ be the kernel of this map.  It is an irreducible representation of $F_4^c$ of dimension $273$.

We will require special vectors $X_{!},Y_{!}$ in $J\otimes K$.  To define them, let $t$ be the square-root of $-1$ in $K$, to distinguish it from $i \in H$ and $i_K \in H_K$.  Then, in $\O_K$, we define the elements

\begin{itemize}
\item $r_1=\frac{1}{2}(0,1-t \cdot i_K)$
\item $r_2=r_1$
\item $r_3 = -t  (i_K, 0)$
\item $s_1= -t  (0,i_K)$
\item $s_2 = \frac{1}{2} (0,1+t \cdot i_K)$
\item $s_3 = -\frac{1}{2} (1+t \cdot i_K,0)$.
\end{itemize}
Now, we set $X_{!} = [1,-1,0,r_1,r_2,r_3]$ and $Y_{!} = [0,-1,1,s_1, s_2, s_3]$ as elements of $J\otimes K$.

\begin{lemma} The wedge $X_{!} \wedge Y_{!} \in \wedge^2 J^0_K$ sits in $V_1$, and is a highest weight vector in $V_1$ for a Borel subgroup of $F_{4,K}^c$.
\end{lemma}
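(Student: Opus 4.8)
The plan is to deduce \emph{both} assertions at once from a single weight computation, exploiting the fact that $\wedge^2 J^0_K$ has only two irreducible constituents. The given surjection $\wedge^2 J^0 \to \mathfrak{f}_4$, $X \wedge Y \mapsto \Phi_{X\wedge Y} = \Phi'_{\iota(X),Y} - \Phi'_{\iota(Y),X}$, has irreducible kernel $V_1$ of dimension $273$, and since $\dim \wedge^2 J^0 = \binom{26}{2} = 325 = 273 + 52$, reductivity gives an $F_4^c$-equivariant splitting $\wedge^2 J^0 = V_1 \oplus \mathfrak{f}_4$ into two irreducibles with distinct highest weights (the adjoint $\mathfrak{f}_4$ has highest weight the highest root, while $V_1$ has a different one). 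Hence for any Borel $B \subseteq F_{4,K}^c$ with unipotent radical Lie algebra $\mathfrak{n}^+$, the $\mathfrak{n}^+$-invariants in $\wedge^2 J^0_K$ form a two-dimensional space spanned by one highest weight line in each summand, and those two lines carry different torus weights. Therefore it suffices to show that $X_! \wedge Y_!$ is a $T$-weight vector annihilated by $\mathfrak{n}^+$ whose weight is \emph{not} the highest root: this forces it onto the highest weight line of $V_1$, proving simultaneously that $X_! \wedge Y_! \in V_1$ and that it is a $B$-highest weight vector there.

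First I would fix a maximal torus $T$ and Borel $B$ of $F_{4,K}^c$ adapted to the split basis of $\O_K$ recorded above, and exhibit simple root vectors of $\mathfrak{f}_{4,K}$ explicitly through the operators $\Phi_{X \wedge Y}$. Under $T$, the $26$-dimensional representation $J^0_K$ decomposes into its $24$ short-root weight spaces together with a two-dimensional zero weight space, and I would verify that $X_!$ and $Y_!$ are weight vectors whose weights $\mu := \mathrm{wt}(X_!)$ and $\nu := \mathrm{wt}(Y_!)$ are, respectively, the highest weight of $J^0_K$ and the unique weight immediately below it, so that the only weight of $J^0_K$ exceeding $\nu$ is $\mu$ and $\mu-\nu$ is a simple root. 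Then $X_! \wedge Y_!$ has weight $\mu+\nu$, which is the highest weight of $V_1$ and in particular is not the highest root. This is the computational heart of the argument: it requires writing $X_!, Y_!$ in the split coordinates and diagonalizing the $T$-action, a finite calculation carried out in the SAGE implementation.

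Given the weights, annihilation by $\mathfrak{n}^+$ is immediate from the derivation property of the wedge action: for any simple root vector $e_\alpha$,
\[ e_\alpha \cdot (X_! \wedge Y_!) = (e_\alpha X_!) \wedge Y_! + X_! \wedge (e_\alpha Y_!). \]
Since $\mu$ is the highest weight, $e_\alpha X_! = 0$ for every positive $\alpha$; since $\nu$ is the sub-highest weight, $e_\alpha Y_!$ is nonzero only for $\alpha = \mu-\nu$, in which case it lands in the one-dimensional $\mu$-weight space $K \cdot X_!$, so $X_! \wedge (e_\alpha Y_!) \in X_! \wedge (K\cdot X_!) = 0$. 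Thus $X_! \wedge Y_!$ is $\mathfrak{n}^+$-invariant of weight $\mu+\nu$, which differs from the highest root, and the reduction of the first paragraph concludes.

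I expect the main obstacle to be the explicit description of $T$ and of the simple root vectors of $\mathfrak{f}_{4,K}$, together with the verification that $X_!$ and $Y_!$ occupy exactly the top two weights of $J^0_K$: because $\Phi'_{\gamma,x}$ couples the nonassociative octonion product with the nonlinear cross product $X \times Y$ on the cubic norm structure, the bookkeeping is heavy, which is exactly why the computation is delegated to SAGE. As an independent check I would also evaluate $\Phi'_{\iota(X_!),Y_!} - \Phi'_{\iota(Y_!),X_!}$ directly on a basis of $J_K$ and confirm it is the zero endomorphism, which establishes $X_! \wedge Y_! \in V_1$ outright, and then separately verify that the chosen simple root vectors annihilate it.
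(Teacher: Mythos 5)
Your reduction is sound representation theory, and it is a genuinely different route from the paper: the paper gives no argument at all for this lemma, simply citing \cite{pollackETF} (Lemma 3.1.3 and Example 3.1.4), whereas you propose a self-contained weight-theoretic proof. The facts you use are all correct: $\wedge^2 J^0_K \simeq V_1 \oplus \mathfrak{f}_{4,K}$ with $325 = 273 + 52$, so the $\mathfrak{n}^+$-invariants form exactly two lines whose $T$-weights differ (the highest weight of $V_1$ versus the highest root); the $26$-dimensional representation has weights the $24$ short roots plus a two-dimensional zero weight space, its second-highest weight $\nu$ satisfies $\mu - \nu = $ a simple root and is the unique weight below $\mu$; and granting that $X_!$ and $Y_!$ are $T$-weight vectors of weights $\mu$ and $\nu$, the derivation computation $e_\alpha\cdot(X_!\wedge Y_!) = (e_\alpha X_!)\wedge Y_! + X_!\wedge(e_\alpha Y_!) = 0$ is valid, and the weight $\mu+\nu$ is indeed the highest weight of $V_1$ rather than of $\mathfrak{f}_4$.

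The genuine gap is that the one claim specific to $X_!$ and $Y_!$ --- the pivot on which everything turns --- is never established. You do not exhibit the maximal torus, the Borel, or the simple root vectors of $\mathfrak{f}_{4,K}$, and you do not verify that $X_!$ and $Y_!$ are weight vectors realizing the top two weights of $J^0_K$; this is deferred to a ``finite calculation in SAGE'' that is described but not performed. Every other step of your argument is generic and would apply verbatim to any pair of vectors, so without this verification the lemma is not proved. A further caution: your plan fixes a torus ``adapted to the split basis'' \emph{in advance} and then checks weights, but there is no guarantee that this particular torus works for the given $X_!, Y_!$ (both of which have nonzero diagonal parts $[1,-1,0]$ and $[0,-1,1]$ mixed with octonion entries). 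What is true abstractly is that \emph{if} the lemma holds, then some conjugate torus makes $X_!, Y_!$ the top two weight vectors --- the $\SL_2$ of the simple root $\mu-\nu$ acts on the plane spanned by the top two weight spaces as its standard representation, so any basis of a ``good'' plane consists of weight vectors for a suitable conjugate --- but locating that torus explicitly is precisely the computation you have not done. Your fallback check (evaluate $\Phi'_{\iota(X_!),Y_!} - \Phi'_{\iota(Y_!),X_!}$ on a basis of $J_K$ to get $X_!\wedge Y_!\in V_1$, then check annihilation by explicit positive root vectors) is the more robust route and closer in spirit to what the cited reference does, but it too is only proposed, not carried out.
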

\begin{proof} This is proved in \cite[Lemma 3.1.3 and Example 3.1.4]{pollackETF}.\end{proof}

We will also need other vectors in $J_K$, obtained from $X_{!}$ and $Y_{!}$ by $F_4^c(K)$-automorphisms.  To produce many such automorphisms, we use the following lemma.  Let $e_{11} = [1,0,0;0,0,0]$ and similarly define $e_{22} = [0,1,0;0,0,0], e_{33} = [0,0,1;0,0,0]$.
\begin{lemma} \label{lem:F4nilp} Suppose $v' \in \mathbb{O}\otimes K$ is in the set $\{\epsilon_1, \epsilon_2, e_1, e_2, e_3, e_1^*,e_2^*,e_3^*\}$.  Then  $\Phi'_{u \wedge v}$, with $u = e_{11}, v= V(0,v',0)$, $u = e_{11}, v = V(0,0,v')$, and $u = e_{22}, v = V(v',0,0)$, are nilpotent elements in the Lie algebra $\mathfrak{f}_4$.  They satisfy $(\Phi'_{u \wedge v})^3 = 0$.
\end{lemma}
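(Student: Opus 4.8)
The plan is to handle the three families uniformly, since in each case $u=e_{ii}$ is a rank-one diagonal idempotent and $v$ is one of $V(0,v',0)$, $V(0,0,v')$, $V(v',0,0)$ with $v'$ a single split basis element of $\O\otimes K$. I read $\Phi'_{u\wedge v}$ as the natural extension of the wedge map, $\Phi'_{u\wedge v}:=\Phi'_{\iota(u),v}-\Phi'_{\iota(v),u}$. Because $\Phi'_{\gamma,x}\in Lie(M_J^1)$ for all $\gamma,x$ by \cite[Proposition 1.1]{PWZ}, this difference automatically lies in $Lie(M_J^1)=\mathfrak{e}_6$, so the two things to prove are that it annihilates $I$ (hence lies in $\mathfrak{f}_4$, the infinitesimal stabilizer of $I$) and that its cube is zero.

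For the first, the crux is the operator identity $\Phi'_{\iota(I),v}=\Phi'_{\iota(v),I}$ on all of $J$, valid for every $v\in J^0$. I would verify it by substituting into the defining formula for $\Phi'$, using $I\times Y=\tr(Y)I-Y$, the pairing relation $(Z,X\times Y)=(Z,X,Y)$, and the trilinear identity $(I,v,z)=\tr(v)\tr(z)-(v,z)_I=-(v,z)_I$ (the last equality because $\tr v=0$); the two operators then differ by the multiple $[-(I,v,z)-(v,z)_I]\,I$ of $I$, which vanishes. Granting this, write $u=e_{ii}=u_0+\tfrac13 I$ with $u_0=e_{ii}-\tfrac13 I\in J^0$. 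Then $\Phi'_{u\wedge v}=\big(\Phi'_{\iota(u_0),v}-\Phi'_{\iota(v),u_0}\big)+\tfrac13\big(\Phi'_{\iota(I),v}-\Phi'_{\iota(v),I}\big)=\Phi_{u_0\wedge v}$, the image of $u_0\wedge v\in\wedge^2 J^0$ under the $F_4^c$-equivariant map $\wedge^2 J^0\to\mathfrak{f}_4$ recalled above. In particular $\Phi'_{u\wedge v}\in\mathfrak{f}_4$.

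For the nilpotency I would argue that $\Phi_{u_0\wedge v}$ is a root vector. Over $K$, pick the maximal torus of $F_{4,K}^c$ that fixes the diagonal and diagonalizes the split octonion basis $\{\epsilon_1,\epsilon_2,e_1,e_2,e_3,e_1^*,e_2^*,e_3^*\}$; its zero weight space on $J^0$ is exactly the traceless diagonal $\{[c_1,c_2,c_3;0,0,0]:\sum c_i=0\}$. With respect to this torus $u_0$ has weight $0$ and $v$ has a nonzero weight $\mu$ (it is rank one, since $v'$ isotropic gives $n_{\O}(v')=0$ and hence $v^\#=0$, and it is off-diagonal so $\mu\neq0$). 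As the weights of the $26$-dimensional representation $J^0$ are precisely the short roots of $F_4$ together with $0$, the weight $\mu$ is a short root; thus $\Phi_{u_0\wedge v}$ lies in the one-dimensional root space $\mathfrak{f}_{4,\mu}$ and is a scalar multiple of a short-root vector $X_\mu$. Finally, every $\mu$-string among the weights of $J^0$ has length at most $3$, because $\langle\lambda,\mu^\vee\rangle\in\{-2,-1,0,1,2\}$ for all weights $\lambda$ of $J^0$ when $\mu$ is short; hence $X_\mu^3=0$ on $J^0$, and since $X_\mu$ kills $I$ we conclude $(\Phi'_{u\wedge v})^3=0$ on all of $J$.

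The main obstacle is this last step: one must make sure $\Phi_{u_0\wedge v}$ is genuinely a single weight vector (equivalently, that the projection of $u_0\wedge v$ to $\mathfrak{f}_4$ is homogeneous for the torus), which is exactly what the choice of torus and the weight bookkeeping guarantee; the rank-one-ness and orthogonality of $u$ and $v$ are what keep $\mu$ nonzero and the root-string bound clean. Consistent with the computational spirit of this paper, all three ingredients --- the identity $\Phi'_{\iota(I),v}=\Phi'_{\iota(v),I}$, membership in $\mathfrak{f}_4$, and $(\Phi'_{u\wedge v})^3=0$ --- can alternatively be checked directly in SAGE.
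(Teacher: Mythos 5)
Your proposal is correct, but it proves the lemma by a genuinely different route than the paper does. The paper's own proof is essentially computational: it cites \cite[page 23]{pollackNotes} for context on these Lie algebra elements and then simply verifies $(\Phi'_{u\wedge v})^3=0$ directly in SAGE, in keeping with the computational spirit of the article. You instead give a structural argument: (i) the identity $\Phi'_{\iota(I),v}=\Phi'_{\iota(v),I}$ for $v\in J^0$ (which does check out -- writing $I\times Y=\tr(Y)I-Y$ and $v\times I=-v$, the $I$-components and the $v\times z$, $\tr(z)v$ terms cancel pairwise), so that $\Phi'_{u\wedge v}=\Phi_{u_0\wedge v}$ with $u_0=e_{ii}-\tfrac13 I\in J^0$, giving membership in $\mathfrak{f}_4$ from the paper's stated surjection $\wedge^2 J^0\to\mathfrak{f}_4$; and (ii) the observation that $u_0\wedge v$ is a weight vector of weight $\mu$ equal to the (nonzero, short-root) weight of $v$ for a maximal torus of $\Spin_8\subset F_{4,K}^c$ fixing the diagonal, so by equivariance and one-dimensionality of root spaces $\Phi_{u_0\wedge v}$ is a multiple of a short-root vector, whose cube kills $J$ because $\langle\lambda,\mu^\vee\rangle\in\{-2,\dots,2\}$ for every weight $\lambda$ of $J$ forces $\mu$-strings of length at most $3$. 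What your approach buys is an explanation of \emph{why} the exponent is $3$ (it is exactly the string-length bound for short roots acting on the $27$-dimensional module; long-root vectors would even square to zero), a uniform treatment of all $24$ cases, and independence from machine computation; what it costs is reliance on standard but unproved-here structure theory, chiefly the existence of a torus simultaneously diagonalizing the split octonion basis in all three off-diagonal slots (true, via the standard weight basis of the split Albert algebra, but asserted rather than proved in your write-up). Your closing remark that everything can alternatively be checked in SAGE is, in fact, precisely what the paper does.
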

\begin{proof} For some context regarding this Lie algebra elements, see \cite[page 23]{pollackNotes}.  One can verify in SAGE that the $(\Phi'_{u,v})^3 = 0$.\end{proof}

\subsection{SAGE implementation of octonions}
SAGE already has quaternion algebras implemented.  Using the Cayley-Dickson construction, i.e., realizing $\O$ as $H \oplus H$ with $H$ equal to Hamilton's quaterions, one can realize the octonions in SAGE.  Namely, octonions in SAGE are represented as a vector consisting of two elements of the quaternion algebra $H$, or two elements of the quaternion algebra $H \otimes K$.  Our file, \texttt{g2\_motives.sage}, contains functions to multiply two octonions, take their trace and conjugate, and compute the inner product of two octonions.  Also included is the trilinear form $(x_1, x_2, x_3)_{\O}$.

To do computations, we have some specific bases of $\O$ and of $\O \otimes K$ hard-coded into the program.  The first basis is the ordered Coxeter basis, which is a $\Z$-basis of Coxeter's ring $R$.  The second basis is the split basis of $\O \otimes K$, ordered as $[\epsilon_1, e_1, e_2, e_3, e_1^*, e_2^*, e_3^*, \epsilon_2]$.  There is some built-in code to change basis from one to another, and to go from octonions, to vectors of length $8$ of elements of $\Q$ or $K$.  The Gram matrices for the trace pairing and the norm pairing, with respect to the Coxeter basis, are hard-coded into the file.

\subsection{SAGE implementation of the exceptional cubic norm structure}
Building upon the octonions above, we represent elements of $J$ or $J \otimes K$ as a list of length $6$, $X = [c_1,c_2,c_3, x_1, x_2, x_3]$ where the $c_i \in K$ and the $x_i \in \O \otimes K$ are octonions.  The code contains functions to compute the norm of an element $X$, the element $\iota(X^\#)$ or $\iota(X \times X')$, the pairings $(X,X')_I$ and $(X,X')_E$, and the trilinear form $(X_1, X_2, X_3)_J$.  The Gram matrix for the pairing $(X,X)_E$, for $X \in J_R$, is hard-coded into the file.

\section{Siegel modular forms of genus three}\label{sec:Siegel}
We begin with a discussion of results of \cite{pollackETF} that apply to Siegel modular forms of genus three.  We then discuss the SAGE implementation of the Fourier coefficient formula.

\subsection{Fourier expansion of the holomorphic exceptional theta lift}
In order to put Theorem \ref{thm:Sp6} into context and to explain how it is proved, recall that this result asserts the existence of Siegel modular forms of genus three with all Satake parameters in $G_2(\C)$.  The Siegel modular forms arise as an exceptional theta lift from algebraic modular forms for the group $G_2^c$, as studied in Gross-Savin \cite{grossSavin}.  Specifically, there is a group $H$ of type $E_{7,3}$, and a very special automorphic representation $\Pi_{min,H}$ on $H(\A)$ called the minimal automorphic representation, whose study was begun by Kim \cite{kimMin}.  Gross-Savin use automorphic functions in $\Pi_{min,H}$ to lift automorphic forms on $G_2^c$ to automorphic forms on $\Sp_6$, using the fact that $\Sp_6 \times G_2^c \rightarrow H$.   Namely, if $\alpha$ is an automorphic form on $G_2^c(\A)$ and $\Theta_f \in \Pi_{min,H}$, then one can define the lift 
\[\Theta_f(\alpha)(g) = \int_{G_2^c(\Q) \backslash G_2^c(\A)}{\Theta_f(gh) \alpha(h)\,dh}\]
which is an automorphic form on $\Sp_6$.  These lifts can be made to be vector-valued Siegel modular forms, as proved in \cite{grossSavin}.

Gross-Savin \cite{grossSavin}, Magaard-Savin \cite{magaardSavin} and Gan-Savin \cite{ganSavinLLC} have produced numerous and deep results on this exceptional theta lift.  One question left open by these works, however, is to determine when $\Theta_f(\alpha)$ is nonzero for explicit $\alpha$.  One of the first main results of \cite{pollackETF} solves this question, in the case of level one: It gives explicit formulas to determine the Fourier coefficients of the Siegel modular forms corresponding to the $\Theta_f(\alpha)$.

We explicate some portion of this result, referring to \cite{pollackETF} for more details.  Recall that we denote by $\O$ the octonion algebra over $\Q$ with positive-definite norm form and $R \subseteq \O$ is Coxeter's order of integral octonions.   We write $J = H_3(\O)$ the exceptional cubic norm structure, which is the $27$-dimensional $\Q$-vector space consisting of $3 \times 3$ Hermitian matrices with ``coefficients" in $\O$.  and let $J_R$ be the integral lattice in $J$ consisting of elements whose diagonal entries are in $\Z$ and off-diagonal entries are in $R$.  Recall we denote by $V_7$ the elements of $\O$ with $0$ trace and $K = \Q(\sqrt{-1})$.  For $T \in J$ and $Z \in S_3(\C)$ set $(T,Z) = \frac{1}{2} \tr(TZ + ZT) \in \C$.  Recall that elements of $J$ have a notion of \emph{rank}, which is an integer in $\{0,1,2,3\}$.

Let $W_3$ denote the standard representation of $\GL_3$, with basis $w_1, w_2, w_3$, and let $V_3 = \wedge^2 W_3$ the exterior square representation, with basis $v_1 = w_2 \wedge w_3, v_2 = w_3 \wedge w_1, v_3 = w_1 \wedge w_2$.  Recall that every irreducible algebraic representation  $V$ of $\GL_3(\C)$ sits as the highest weight submodule in $Sym^{k_1}(V_3) \otimes Sym^{k_2}(W_3) \otimes \det(W_3)^{k_3}$ for integers $k_1, k_2, k_3$ with $k_1, k_2 \geq 0$.  The highest weight of such a $V$ is $(k_1+k_2+k_3, k_1 + k_3, k_3)$.  Thus, if $f$ is a level one Siegel modular form of weight $(\rho,V)$, then the Fourier coefficients of $f$ are naturally polynomials in $v_1, v_2, v_3, w_1, w_2, w_3$ of bi-degree $(k_1,k_2)$.

Suppose $u, v \in V_7 \otimes K$ satisfy $u^2 = uv = vu = v^2 = 0$; such a pair is said to be \emph{null}.  Let $T \in J$ have off-diagonal entries $x_1, x_2, x_3 \in \Theta$.  For non-negative integers $k_1, k_2$, set 
\begin{align*} P_{k_1, k_2}(T;u,v) &=  ((x_1,u) v_1 + (x_2,u) v_2 + (x_3,u) v_3)^{k_1} \\ &\times ((x_1 \wedge x_2, u \wedge v)w_3 + (x_2 \wedge x_3, u\wedge v) w_1 + (x_3 \wedge x_1,u \wedge v)w_2)^{k_2}.\end{align*}
Here $(x,u) = \tr(x^* u)$ is the bilinear form associated to the norm on $\Theta$, and $(x \wedge y, u \wedge v) = (x,u)(y,v) - (x,v)(y,u)$.  Moreover, in case either $k_1$ or $k_2$ is equal to $0$, $r^{k}$ is defined to be $1$ if $k=0$, regardless of if $r = 0$.  Fortunately, this is the convention that appears to be used by SAGE.

Finally, so long as at least one of $k_1, k_2$ is positive, set
\[F_{k_1, k_2}^{u,v}(Z) = \sum_{T \in J_R, \mathrm{rank}(T)  = 1}{ \sigma_3(d_T) P_{k_1, k_2}(T;u,v) e^{2\pi i (T,Z)}}\]
where $d_T$ is the largest integer for which $d_T^{-1} T \in J_R$.
\begin{theorem}[See \cite{pollackETF}] \label{thm:sp6} Let the notation be as above, with at least one of $k_1, k_2$ positive.  Let $V \subseteq Sym^{k_1}(V_3) \otimes Sym^{k_2}(W_3) \otimes \det(W_3)^{k_2+4}$ be the highest weight submodule so that the highest weight of $V$ is $(k_1+2k_2+4, k_1+k_2+4, k_2+4)$.  Then $F_{k_1,k_2}^{u,v}(Z)$ is a level one holomorphic Siegel modular form on $\Sp_6$ of weight $(\rho,V)$.
\end{theorem}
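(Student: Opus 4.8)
The plan is to identify $F_{k_1,k_2}^{u,v}$ with the holomorphic Siegel modular form attached to an explicit exceptional theta lift, and then to read off its Fourier expansion. Concretely, the null pair $u,v \in V_7 \otimes K$ together with the integers $k_1,k_2$ determines an algebraic modular form $\alpha$ for $G_2^c$ (a highest-weight vector for the representation of $G_2^c(\R)$ whose highest weight is governed by $(k_1,k_2)$), and one fixes a vector $\Theta_f \in \Pi_{min,H}$ in the minimal representation of the group $H$ of type $E_{7,3}$. Using the embedding $\Sp_6 \times G_2^c \to H$, the Gross--Savin lift $\Theta_f(\alpha)(g) = \int_{G_2^c(\Q)\backslash G_2^c(\A)} \Theta_f(gh)\alpha(h)\,dh$ is an automorphic form on $\Sp_6$, and by the results of Gross--Savin it is a \emph{holomorphic} vector-valued Siegel modular form of genus three of the indicated weight. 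Modularity under $\Sp_6(\Z)$, holomorphy, and moderate growth are therefore automatic from this realization; the entire content of the theorem is then the \emph{computation} of the Fourier coefficients of $\Theta_f(\alpha)$ and the verification that they agree with $\sigma_3(d_T)\,P_{k_1,k_2}(T;u,v)$.

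First I would record the Fourier expansion of $\Theta_f$ itself. The minimal representation $\Pi_{min,H}$ has Fourier coefficients supported on elements of rank at most one in the associated Freudenthal lattice, and these coefficients are controlled by divisor-type arithmetic functions; restricting the $H$-theta kernel along $\Sp_6 \times G_2^c$ and inserting its Fourier expansion is the key input. Next I would \emph{unfold} the integral over $G_2^c(\Q)\backslash G_2^c(\A)$: integration against the highest-weight vector $\alpha$ projects the Fourier data onto the line spanned by $u$ and $v$, which is precisely what produces the polynomial $P_{k_1,k_2}(T;u,v)$ built from the inner products $(x_i,u)$ and the wedges $(x_i \wedge x_j, u\wedge v)$. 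The sum thereby collapses to a sum over rank-one $T \in J_R$, and the arithmetic multiplicity $\sigma_3(d_T)$ emerges from organizing the contributions at the finite places according to the content $d_T$ of $T$ (the largest integer with $d_T^{-1}T \in J_R$), in the level-one case $\Gamma = \Sp_6(\Z)$.

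For the weight, I would confirm by a highest-weight computation that $P_{k_1,k_2}(T;u,v)$ is valued in the claimed irreducible $V$. The first factor of $P$ lies in $Sym^{k_1}(V_3)$, which is irreducible of highest weight $(k_1,k_1,0)$, the second factor lies in $Sym^{k_2}(W_3)$ of highest weight $(k_2,0,0)$, and the normalizing twist contributes $(k_2+4)(1,1,1)$; these add to $(k_1+2k_2+4,\,k_1+k_2+4,\,k_2+4)$, matching $V$. Because each factor is a pure power of a single $\GL_3$-covariant vector, the product lands in the Cartan (highest-weight) component rather than spreading across the other constituents of the tensor product, so $F$ is genuinely $V$-valued; one then matches this $V$ to the archimedean component of the chosen vector in $\Pi_{min,H}$.

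The main obstacle is the explicit Fourier-coefficient computation and unfolding of the previous paragraph: extracting precisely the polynomial $P_{k_1,k_2}$ and, especially, the exact arithmetic factor $\sigma_3(d_T)$ from the Fourier coefficients of the minimal representation, while simultaneously pinning down the vector-valued archimedean weight. If instead one sought a self-contained proof of modularity without invoking Gross--Savin, the difficulty would shift to the analytic functional equation under $Z \mapsto -Z^{-1}$, which would require an octonionic Poisson summation (Siegel--Weil type) identity for the lattice $J_R$; by contrast the invariance under the translations $Z \mapsto Z + B$ with $B \in S_3(\Z)$ (immediate since $(T,B) \in \Z$ for $T \in J_R$) and under $\GL_3(\Z)$ (which matches the $\GL_3$-equivariance built into $P$) is comparatively routine.
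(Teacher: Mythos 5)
Your outline follows the same route that the paper attributes to this result: the paper itself gives no proof of Theorem \ref{thm:sp6} (it is quoted from \cite{pollackETF}), and its surrounding exposition describes exactly the mechanism you propose --- realize the $F_{k_1,k_2}^{u,v}$ through the Gross--Savin lift $\Theta_f(\alpha)$ via the minimal representation of $E_{7,3}$, get modularity, holomorphy and the weight from \cite{grossSavin}, and obtain the rank-one--supported Fourier expansion with the factors $P_{k_1,k_2}(T;u,v)$ and $\sigma_3(d_T)$ by unfolding against the Fourier expansion of $\Pi_{min,H}$. The part you explicitly defer (extracting the exact polynomial and the exact arithmetic factor) is precisely the content of the cited paper, so as a blind reconstruction your plan is faithful to the intended argument.

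One step of your write-up is, however, wrong as stated: the claim that ``because each factor is a pure power of a single $\GL_3$-covariant vector, the product lands in the Cartan component.'' That is not a valid general principle. For instance $v_1 \otimes w_1 \in V_3 \otimes W_3$ has nonzero image under the contraction $V_3 \otimes W_3 \rightarrow \wedge^3 W_3 = \det(W_3)$ (since $v_1 \wedge w_1 = w_2 \wedge w_3 \wedge w_1 \neq 0$), so it does not lie in the Cartan component $V_{(2,1,0)}$. What saves the statement here is a specific identity: writing $P_{k_1,k_2}(T;u,v) = a^{k_1} b^{k_2}$ with $a = \sum_i (x_i,u) v_i$ and $b = \sum_i (x_{i+1} \wedge x_{i+2}, u \wedge v) w_i$ (indices mod $3$), the contraction pairing of $a$ with $b$ is
\[
\langle a, b\rangle = \sum_{i} (x_i,u)\,\bigl( (x_{i+1},u)(x_{i+2},v) - (x_{i+1},v)(x_{i+2},u) \bigr),
\]
and the six terms cancel in pairs, so $\langle a,b\rangle = 0$ identically. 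Since contraction acts on products of powers of linear forms by $a^{k_1}\otimes b^{k_2} \mapsto k_1 k_2 \langle a,b\rangle\, a^{k_1-1}\otimes b^{k_2-1}$, and the Cartan component of $Sym^{k_1}(V_3)\otimes Sym^{k_2}(W_3)$ is the kernel of the contraction, this identity (and not the pure-power structure alone) is what places the Fourier coefficients in $V$. With that repair, your weight computation is correct.
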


One has the following corollary, which is explained in \cite{pollackETF}.
\begin{corollary} \label{cor:sp6} Let the notation be as above, and let $S^{\Theta}_{k_1,k_2}$ be the span of the $F_{k_1,k_2}^{u,v}$ as $u,v$ vary over null pairs. \begin{enumerate}
\item (Gross-Savin) If $k_2 > 0$, the space $S^{\Theta}_{k_1,k_2}$ is contained in the space of cusp forms.
\item The cusp forms in $S^{\Theta}_{k_1,k_2}$ are exactly the level one theta lifts of Gross-Savin, which come from algebraic modular forms on $G_2^c$ of weights $k_1\omega_1 + k_2 \omega_2$, where $\omega_1$ is the highest weight of the $7$-dimensional irrep of $G_2$ and $\omega_2$ is the highest weight of the adjoint representation of $G_2$.
\item (Gross-Savin, Magaard-Savin, Gan-Savin) Suppose $F \in S^{\Theta}_{k_1,k_2}$ is a cuspidal Hecke eigenform.  Then all Satake parameters of $F$ lie in $G_2(\C) \subseteq \SO_7(\C)$.
\end{enumerate}
\end{corollary}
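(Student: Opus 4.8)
The plan is to deduce all three parts from the identification of $F_{k_1,k_2}^{u,v}$ with the Gross--Savin theta lift $\Theta_f(\alpha)$, supplemented by a direct analysis of the explicit Fourier expansion. By Theorem \ref{thm:sp6} we may already take for granted that each $F_{k_1,k_2}^{u,v}$ is a level one Siegel modular form of the asserted weight, so the remaining content is: cuspidality when $k_2 > 0$; the matching of the span with the Gross--Savin lifts together with the reading-off of the source weight; and the location of the Satake parameters inside $G_2(\C)$.

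For part (1) I would argue directly from the Fourier formula using the Siegel $\Phi$ operator. Cuspidality of a genus three form is equivalent to $\Phi(F) = 0$, i.e.\ to the vanishing of $a_F(S)$ for every positive semidefinite $S \in S_3(\Z)^\vee$ of rank $< 3$. By the $\GL_3(\Z)$-equivariance of the Fourier coefficients, each such singular $S$ is equivalent to one whose last row and column vanish, so it suffices to treat $S = \diag(S',0)$. A rank-one positive semidefinite $T \in J_R$ lying above such an $S$ has $c_3 = 0$; since a rank-one positive semidefinite octonionic Hermitian matrix is an outer product, $c_3 = 0$ forces the entire third row and column to vanish, so $x_1 = x_2 = 0$. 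Substituting into $P_{k_1,k_2}(T;u,v)$, each of the wedges $(x_1 \wedge x_2, u \wedge v), (x_2 \wedge x_3, u \wedge v), (x_3 \wedge x_1, u \wedge v)$ entering the second factor contains one of $x_1, x_2$ and hence vanishes, so that factor equals $0^{k_2}$. When $k_2 > 0$ this is $0$, whence $a_F(S) = 0$ and $\Phi(F) = 0$. This reproves the Gross--Savin cuspidality statement from the explicit formula alone.

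For part (2) I would recall from \cite{pollackETF} that the series $F_{k_1,k_2}^{u,v}$ is, by its very construction, the Fourier expansion along the Siegel parabolic of the abstract lift $\Theta_f(\alpha)$: one computes the Fourier coefficients of the minimal automorphic form $\Theta_f$ on $H$ along the relevant Heisenberg unipotent (where only elements of $J$ of rank $\le 1$ contribute), restricts along $\Sp_6 \times G_2^c \to H$, and integrates against $\alpha$. The null pair $(u,v)$ encodes the highest weight vector for the compact group $G_2^c(\R)$, and branching of the archimedean minimal representation pins the weight of the source algebraic modular form $\alpha$ on $G_2^c$ to $k_1\omega_1 + k_2\omega_2$. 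Letting $(u,v)$ and $\alpha$ vary then exhausts precisely the level one Gross--Savin lifts of that weight, giving the claimed equality of spaces.

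For part (3) the argument is local-global, and this is the main obstacle, since it is not formal but rests on the cited deep input. Because $G_2^c$ is split at every finite place, at each prime $p$ the dual pair is the split pair $(\Sp_6, G_2)_{\Q_p}$, and the local exceptional theta correspondence of Magaard--Savin \cite{magaardSavin} and Gan--Savin \cite{ganSavinLLC}, combined with the local Langlands correspondence for $G_2$, sends an unramified representation of $G_2(\Q_p)$ with Satake parameter $s_p \in G_2(\C)$ to the representation of $\Sp_6(\Q_p)$ whose Satake parameter is the image of $s_p$ under $G_2(\C) \hookrightarrow \SO_7(\C)$. A cuspidal eigenform $F \in S^\Theta_{k_1,k_2}$ is the everywhere-unramified lift of an everywhere-unramified eigenform on $G_2^c$, so at each $p$ its Satake parameter is the image of a class in $G_2(\C)$, as asserted. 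The difficulty is entirely concentrated in the precise identification of this local theta correspondence with the functorial transfer attached to $G_2(\C) \hookrightarrow \SO_7(\C)$ at every place, which is exactly what \cite{grossSavin}, \cite{magaardSavin}, and \cite{ganSavinLLC} supply; by contrast part (1) is an elementary consequence of the Fourier formula and part (2) is a repackaging of the Fourier-coefficient computation underlying Theorem \ref{thm:sp6}.
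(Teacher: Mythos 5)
The paper itself offers no proof of this corollary---it simply attributes the three parts to Gross--Savin, Magaard--Savin, and Gan--Savin and says the statement ``is explained in \cite{pollackETF}''---so your proposal must be measured against those sources. It is correct in outline, and part (1) is where you genuinely diverge: instead of citing Gross--Savin's representation-theoretic proof of cuspidality of the lift, you verify directly from the explicit expansion of Theorem \ref{thm:sp6} that every singular Fourier coefficient vanishes. That argument is sound, and can even be tightened: the outer-product description of rank-one octonionic Hermitian matrices is unnecessary, since if $T = [c_1,c_2,c_3;x_1,x_2,x_3]$ is rank one with $c_3 = 0$, the identities $n_{\O}(x_1) = c_2 c_3$ and $n_{\O}(x_2) = c_3 c_1$ coming from $T^{\#} = 0$ force $x_1 = x_2 = 0$ by positive-definiteness of $n_{\O}$, after which every wedge pairing in the $k_2$-factor of $P_{k_1,k_2}(T;u,v)$ involves $x_1$ or $x_2$, so that factor is $0^{k_2} = 0$; the preliminary reduction of $S$ to the form $\diag(S',0)$ via $\GL_3(\Z)$-equivariance is standard because $\ker S \cap \Z^3$ is a saturated sublattice. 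What this elementary route buys is a proof of (1) that needs only the $q$-expansion and not the identification of $F^{u,v}_{k_1,k_2}$ with a lift $\Theta_f(\alpha)$; the cost is nil, since that identification is indispensable for (2) and (3) anyway. For those two parts your argument coincides in substance with what the cited sources supply rather than constituting an independent proof: (2) is precisely the Fourier-expansion computation of the lift carried out in \cite{pollackETF} (and here your appeal to ``branching of the archimedean minimal representation'' is loose---the weight $k_1\omega_1 + k_2\omega_2$ is wired into the construction through the highest weight vector determined by the null pair $(u,v)$ that defines $P_{k_1,k_2}$, not deduced a posteriori from archimedean branching), while (3) is the unramified local theta correspondence of \cite{magaardSavin}, \cite{ganSavinLLC}, which you correctly identify as the irreducible deep input rather than something to reprove. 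So: part (1) is a legitimate, more elementary alternative to the literature; parts (2) and (3) are accurate repackagings of the same citations the paper relies on.
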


Thus, the theorem and the corollary give explicit ways of constructing cuspidal eigenforms on $\Sp_6$, all of whose Satake parameters are in $G_2(\C)$.  The code implemented produces finitely many Fourier coefficients of the $F_{k_1,k_2}^{u,v}$.  Having run this code, we can obtain Theorem \ref{thm:Sp6}.

\begin{proof}[Proof of Theorem \ref{thm:Sp6}] Suppose $(k_1, k_2)$ is a pair in the table above. Chenevier-Renard \cite{chenevierRenard} have computed the dimension $m(k_1,k_2)$ of the space of algebraic modular forms on $G_2^c$ of weight $(k_1,k_2)$.  From their comutations combined with \cite{chenevierTaibi}, one sees that for the $(k_1,k_2)$ in the table, $m(k_1,k_2) = m(\lambda(k_1,k_2))$. Consider now the map from level one algebraic modular forms on $G_2^c$ of weight $k_1 \omega_1 + k_2 \omega_2$ to polynomials in $v_1, v_2, v_3, w_1, w_2, w_3$ that is the composition of the theta lift with the $T^{th}$ Fourier coefficient, for $T = \frac{1}{2}\left(\begin{array}{ccc} 2 & 1 &1\\ 1 & 2& 1\\ 1 & 1& 2 \end{array}\right)$.  Computing the $T^{th}$ Fourier coefficient of a few $F_{k_1,k_2}^{u,v}$, one finds a space of polynomials of dimension at least $m(k_1,k_2)=m(\lambda(k_1,k_2))$.  Consequently, the theta lift is bijective in these cases.  Applying part (1) and (3) of Corollary \ref{cor:sp6} gives the theorem.\end{proof}

\begin{remark} The theta lift from $G_2^c$ to $\Sp_6$ is not expected to be injective in general.  For example, as explained to the author by Gaetan Chenevier, for every level one cuspidal eigenform $f$ of weight $2k$ for $\PGL_2$, there should be an associated level one algebraic modular form on $G_2^c$ of weight $(k-2) \omega_2$.  And, moreover, when $k$ is odd so that the central $L$-value $L(f,1/2) = 0$, the Arthur multiplicity conjecture predicts that this eigenform should not lift to $\Sp_6$.  As a specific example, when $k = 9$, the dimension $m(0,7) = 1$, but computing a few Fourier coefficients of the $F_{0,7}^{u,v}(Z)$ in SAGE for various specific $u,v$ gives $0$.
\end{remark}

\subsection{Sage implementation to find Fourier coefficients}
If $T = [c_1,c_2, c_3; x_1, x_2, x_3]$ define the projection of $T$ to be $T_0 = [c_1, c_2, c_3; \tr(x_1), \tr(x_2), \tr(x_3)]$, which is a half-integral symmetric matrix.  To calculate the $T_0$ Fourier coefficient of $F^{u,v}_{k_1,k_2}(Z)$ on $\Sp_6$, one must sum $\sigma_3(d_T) P_{k_1,k_2}(T;u,v)$ for all rank one $T \in J_R$ whose projection is $T_0$.  The SAGE code will do this for those $T_0$ with $c_1 = 1$.  Note that in this case, $d_T$ is always equal to $1$, so the factor $\sigma_3(d_T) = 1$.

To find those $T \in J_R^{rk=1}$ with $\mathrm{proj}(T) = T_0$, the code implicitly uses the following easy lemma.
\begin{lemma}\label{lem:Trk1} Suppose $T = [1,b,c;x_1, x_2, x_3] \in J$.  Then $T$ is rank one if and only if $n(x_2) = c$, $n(x_3) = b$ and $x_1 = (x_2 x_3)^*$. 
\end{lemma}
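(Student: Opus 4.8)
The plan is to read off the rank-one condition directly from the explicit formula for $\iota(X^\#)$ recorded in Section~\ref{subsec:excAlgJ}. Since $T = [1,b,c;x_1,x_2,x_3]$ has $c_1 = 1 \ne 0$, it is automatically nonzero, so the condition ``$T$ is rank one'' is equivalent to the single requirement $T^\# = 0$, or, what is the same after applying the isomorphism $\iota \colon J \simeq J^\vee$, to $\iota(T^\#) = 0$. Substituting $c_1 = 1$, $c_2 = b$, $c_3 = c$ into the displayed formula for $\iota(X^\#)$ gives
\[ \iota(T^\#) = [\,bc - n(x_1),\ c - n(x_2),\ b - n(x_3);\ (x_2 x_3)^* - x_1,\ (x_3 x_1)^* - b x_2,\ (x_1 x_2)^* - c x_3\,], \]
so that $T^\# = 0$ is equivalent to the six equations $n(x_1) = bc$, $n(x_2) = c$, $n(x_3) = b$, $x_1 = (x_2 x_3)^*$, $b x_2 = (x_3 x_1)^*$, and $c x_3 = (x_1 x_2)^*$. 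The forward implication is then immediate: if $T$ is rank one these all hold, and in particular the three asserted in the lemma.

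For the converse I would assume only the three stated equations, $n(x_2) = c$, $n(x_3) = b$, $x_1 = (x_2 x_3)^*$, and deduce the remaining three. The equation $n(x_1) = bc$ follows from multiplicativity of the octonion norm together with $n(z^*) = n(z)$, since $n(x_1) = n((x_2 x_3)^*) = n(x_2) n(x_3) = c\,b$. The other two require a short octonionic manipulation: using $(x_2 x_3)^* = x_3^* x_2^*$ and then eliminating $x_3^*, x_2^*$ via $z^* = \tr_{\O}(z)\cdot 1 - z$ together with the quadratic relation $z^2 = \tr_{\O}(z) z - n_{\O}(z)$ and the alternativity of $\O$, one computes $x_3 x_1 = x_3(x_3^* x_2^*) = n(x_3)\, x_2^* = b\, x_2^*$, whence $(x_3 x_1)^* = b x_2$; and symmetrically $x_1 x_2 = (x_3^* x_2^*) x_2 = n(x_2)\, x_3^* = c\, x_3^*$, whence $(x_1 x_2)^* = c x_3$. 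Thus all six equations hold, so $T^\# = 0$, and since $c_1 = 1$ forces $T \ne 0$, $T$ is rank one.

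The only nonroutine point is the identity $x_3(x_3^* x_2^*) = n_{\O}(x_3)\, x_2^*$ and its mirror image, which is where the nonassociativity of $\O$ could in principle cause trouble. The hard part will be organizing this bookkeeping correctly: the key observation is that each such triple product involves only the subalgebra generated by a single octonion (namely $x_3$, together with $x_3^* = \tr_{\O}(x_3)\cdot 1 - x_3$) acting on one further element, so after expanding $x_3^*$ the left (resp.\ right) alternative law $a(ab) = (aa)b$ (resp.\ $(ab)b = a(bb)$) applies, and $x_3 x_3^* = n_{\O}(x_3)$ collapses the product as claimed. I expect this alternativity argument, though entirely elementary, to be the only step requiring care.
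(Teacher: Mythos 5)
Your proof is correct. The paper states this as an ``easy lemma'' and gives no proof, and your argument is exactly the intended routine verification: read the six equations off the displayed formula for $\iota(X^\#)$ (noting $c_1=1$ forces $T \neq 0$, so rank one reduces to $T^\#=0$), observe three of them are the lemma's conditions, and recover the other three from the composition property $n_{\O}(xy)=n_{\O}(x)n_{\O}(y)$ together with the alternativity identities $x(x^*y)=n_{\O}(x)\,y$ and $(yx^*)x=n_{\O}(x)\,y$, which you correctly justify by expanding $x^*=\tr_{\O}(x)\cdot 1-x$ and applying the alternative laws.
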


The SAGE code takes in an element $T_0  \in S_3(\Z)^\vee$, with $c_1(T_0) = 1$ and finds all rank one $T$ with projection equal to $T_0$ using Lemma \ref{lem:Trk1}. Then, for each such $T$, the code computes $P_{k_1,k_2}(T;u,v)$ for a given choice of $u,v$, provided by the user, and sums up the results.  The output is a polynomial in the variables $v_1, v_2, v_3, w_1, w_2, w_3$.  To aid the user in constructing null pairs $u,v$, the user must only enter $6$ elements of $K$, $[r_1, r_2, r_3, r_4, r_5, r_6]$.  The code will then produce for you $u_{new} = \exp(n) e_3^*, v_{new} = \exp(n) e_1$, where $n = \sum_{j=1}^{6} r_j \ell_j$ in the notation of Lemma \ref{lem:nilg2}
 
\section{Quaternionic modular forms on $G_2$}
In this section, we give our results on quaternionic modular forms on split $G_2$.  We begin by recalling some of the results of \cite{pollackETF} in this setting, and then explain the proof of Theorem \ref{thm:QMF}.  The proof of this theorem uses SAGE computations, which are also explained in this section.

\subsection{Fourier expansion of the quaternionic exceptional theta lift}
To set up the first result, recall $K = \Q(\sqrt{-1})$.  If $w = (a,b,c,d) \in W_J$, define $pr_I(u,v)$ to be the binary cubic form given as $pr_{I}(u,v) = au^3 + (b,I^\#) u^2 v + (c,I) uv^2 + dv^3$.  Set $J_K = J \otimes K$ and $J_K^0$ to be the trace $0$ elements of $J_K$.  Set $W_J(\Z) = \Z \oplus J_R \oplus J_R^\vee \oplus \Z$.   For $w \in W_{J}(\Z)$, let $d_w$ be the largest integer so that $d_w^{-1} w \in W_{J}(\Z)$. Finally, set $P_{m,I}(w;X,Y) = ((b,X)_I(c,Y) - (b,Y)_I(c,X))^{m}$.

\begin{theorem}[See \cite{pollackETF}] \label{thm:ThetaI} Suppose $X,Y \in J_K^0$ are singular in the sense that $X \wedge Y \in \wedge^2 J_K^0$ is a highest weight vector of $V_1$ for some Borel.  Let $m \geq 1$ be an integer. Then there is a cuspidal quaternionic modular form $\Theta_I(X,Y;m)$ on $G_2$ of level one and of weight $4+m$ with Fourier expansion
\[ \Theta_I(X,Y;m)_Z(g) = \sum_{w = (a,b,c,d) \in W_J(\Z)^{rk = 1}}{ \sigma_4(d_w) P_{m,I}(w;X,Y) W_{pr_I(w),4+m}(g)}.\]
\end{theorem}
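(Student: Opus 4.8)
The plan is to realize $\Theta_I(X,Y;m)$ as an explicit exceptional theta lift from $F_4^c$, exactly parallel to the Siegel construction of Theorem \ref{thm:sp6} but using the dual pair $G_2 \times F_4^c$ inside an exceptional group $H$ of type $E_8$ (quaternionic at infinity), in place of $\Sp_6 \times G_2^c \subseteq E_{7,3}$. Let $\Pi_{\mathrm{min}}$ be the minimal automorphic representation of $H(\A)$, and fix an automorphic form $\theta \in \Pi_{\mathrm{min}}$ that is spherical at all finite places. The singular hypothesis that $X \wedge Y \in \wedge^2 J_K^0$ is a highest weight vector of $V_1$ is precisely what lets us attach to $(X,Y,m)$ an algebraic modular form $\alpha_{X,Y,m}$ on $F_4^c$: namely $\alpha_{X,Y,m}$ takes values in the irreducible $F_4^c$-representation of highest weight $m\lambda_1$, where $\lambda_1$ is the highest weight of $V_1$, with highest weight vector the $m$-th Cartan power of $X \wedge Y$. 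One then defines
\[\Theta_I(X,Y;m)(g) = \int_{F_4^c(\Q)\backslash F_4^c(\A)}{\theta(gh)\,\alpha_{X,Y,m}(h)\,dh}.\]

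The first step is to verify that this integral is a level one cuspidal quaternionic modular form of weight $4+m$. Level one is immediate from the spherical choice of $\theta$ at the finite places. The weight and the defining conditions (1), (2) of a quaternionic modular form come from the archimedean theory of $\Pi_{\mathrm{min}}$: restricted to $G_2(\R) \times F_4^c(\R)$, the minimal representation has the feature that its $F_4^c(\R)$-isotypic vectors in the $m\lambda_1$-piece realize the minimal $K$-type of the weight $4+m$ quaternionic discrete series of $G_2(\R)$. The base weight $4$ is intrinsic to $\Pi_{\mathrm{min}}$, and each Cartan power of $X\wedge Y$ contributes $+1$; this explains both the weight $4+m$ and why the polynomial $P_{m,I}$ below is an $m$-th power. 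The annihilation $\mathcal{D}_{4+m}\Theta_I \equiv 0$ is then inherited from the fact that this minimal $K$-type is killed by the quaternionic Schmid operator.

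The heart of the proof is the Fourier expansion, obtained by unfolding. The quaternionic Fourier coefficients are taken along the Heisenberg parabolic $P = MN$ of $G_2$, whose Levi $M$ is isomorphic to $\GL_2$ and whose unipotent radical $N$ is a $5$-dimensional Heisenberg group with $N/[N,N] \cong \mathrm{Sym}^3$ of the standard representation of $M$; its characters are therefore indexed precisely by binary cubic forms $f$. I would substitute into the theta integral the known Fourier expansion of $\theta \in \Pi_{\mathrm{min}}$ along $N$: its $f$-th Fourier coefficient is supported on, and given as a sum over, the rank one vectors $w = (a,b,c,d) \in W_J(\Z)$ with $pr_I(w) = f$, each term being the generalized Whittaker function $W_{pr_I(w),4+m}$ weighted by the divisor sum $\sigma_4(d_w)$ arising from the arithmetic of the minimal representation's coefficients. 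Interchanging the $F_4^c(\A)$-integration with this sum, the remaining integral of $\alpha_{X,Y,m}$ against the $w$-component collapses, by $F_4^c$-equivariance together with the highest weight property of $X\wedge Y$, to the explicit polynomial $P_{m,I}(w;X,Y) = ((b,X)_I(c,Y) - (b,Y)_I(c,X))^{m}$. This produces the asserted expansion, and cuspidality follows because only rank one $w$ contribute, which forces $pr_I(w)$ to have positive discriminant and hence all constant and degenerate terms to vanish, so that $\Theta_I$ is bounded.

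The hard part will be the explicit Fourier expansion of $\Pi_{\mathrm{min}}$ along the Heisenberg parabolic of $G_2$ and the precise identification of $P_{m,I}$. The former demands a careful analysis of the generalized Whittaker functions and of the rank-one orbit structure of $W_J$, and is exactly where the results of \cite{pollackETF} carry the load; the latter requires checking that pairing the singular vector $(X\wedge Y)^{\otimes m}$ against the $w$-component produces exactly the $m$-th power of the $2\times 2$ Plücker-type expression $(b,X)_I(c,Y) - (b,Y)_I(c,X)$, with the correct normalization and the $\sigma_4(d_w)$ weighting. Matching all of these normalizations against the abstract theta lift is the most delicate bookkeeping in the argument.
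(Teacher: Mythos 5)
Your outline is, in substance, the approach of the reference: this paper does not prove Theorem \ref{thm:ThetaI} at all (it is quoted from \cite{pollackETF}), and the proof there is exactly what you describe---realize $\Theta_I(X,Y;m)$ as the exceptional theta lift, through the minimal automorphic representation of the $\Q$-form of $E_8$ that is split at finite places and quaternionic at infinity, of the algebraic modular form on $F_4^c$ attached to the Cartan power $(X\wedge Y)^{\otimes m}$ of the singular vector, and then unfold against the rank-one-supported, $\sigma_4(d_w)$-weighted Fourier expansion of the theta kernel along the Heisenberg parabolic, with $pr_I$ implementing the restriction of characters from the Heisenberg unipotent of $E_8$ to that of $G_2$.

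There is, however, one concretely wrong step: your cuspidality argument. Rank one of $w$ does \emph{not} force $pr_I(w)$ to have positive discriminant; it only forces discriminant $\geq 0$. For instance $w=(1,0,0,0)=(1,T,T^\#,N_J(T))$ with $T=0$ is rank one and $pr_I(w)=u^3$ is degenerate; more generally $pr_I\bigl((1,T,T^\#,N_J(T))\bigr)=N_J(uI+vT)$ has a repeated root whenever $T$ has repeated eigenvalues, and rank-one $w$ with $a=0$ can even have $pr_I(w)=0$. So degenerate and constant-term contributions do occur in the restricted kernel, and boundedness cannot be read off the way you claim. Cuspidality of the lift for $m\geq 1$ requires a separate constant-term computation (as in Gross--Savin's Siegel analogue, part (1) of Corollary \ref{cor:sp6}): one shows the constant terms of the kernel along the parabolics of $G_2$ pair to zero with the algebraic modular form because, for $m \geq 1$, its nontrivial $F_4^c$-type is orthogonal to the (trivial or low-rank) $F_4^c$-data appearing there. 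This is where \cite{pollackETF} does real work that your sketch elides.
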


For the cubic norm structure $(J_R, E)$, there is an analogous result.
Fix $\delta \in M_J^1(\Q)$ with $\delta E = I$, as in subsection \ref{subsec:excAlgJ}.  Set $(u,v)_E = \frac{1}{4}(E,E,u)(E,E,v)- (E,u,v)$. For $w = (a,b,c,d) \in W_J$, define $pr_E(w) = a u^3 + (b,E^\#) u^2 v + (c,E) uv^2 + d v^3$.  Similar to the above, define $P_{m,E}(w;X_E,Y_E) = ((b,X_E)_E (c, Y_E) - (b,Y_E)_E (c, X_E))^{m}$.
\begin{theorem}[See \cite{pollackETF}] \label{thm:ThetaE} Suppose $X,Y \in J_K^0$ are singular in the sense that $X \wedge Y \in \wedge^2 J_K^0$ is a highest weight vector of $V_1$ for some Borel.  Let $m \geq 1$ be an integer, and set $X_E = \delta^{-1} X$, $Y_E = \delta^{-1} Y$.  Then there is a cuspidal quaternionic modular form $\Theta_{E}(X_E,Y_E;m)$ on $G_2$ of level one and of weight $4+m$ with Fourier expansion
\[\Theta_E(X_E,Y_E;m)_{Z}(g) = \sum_{w = (a,b,c,d) \in W_J(\Z)^{rk=1}}{ \sigma_4(d_w) P_{m,E}(w;X_E,Y_E) W_{pr_E(w),4+m}(g)}.\]
\end{theorem}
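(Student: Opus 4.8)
\emph{Proof proposal.} The plan is to deduce Theorem \ref{thm:ThetaE} from the already-established $I$-version, Theorem \ref{thm:ThetaI}, by transporting the construction along the rational isomorphism $\delta \in M_J^1(\Q)$ with $\delta E = I$ fixed in subsection \ref{subsec:excAlgJ}. The guiding principle is that the construction of \cite{pollackETF} underlying Theorem \ref{thm:ThetaI} should use the base point $I$ only through two features: that $N_J(I) = 1$, and that the form $(\,,\,)_I$ is positive-definite. Both features hold with $I$ replaced by $E$ by the computations recalled in subsection \ref{subsec:excAlgJ}, and moreover $E \in J_R$ is integral, so the lattice $W_J(\Z)$, the rank-one condition, and the weight $\sigma_4(d_w)$ appearing in the $E$-expansion are literally the same as in the $I$-expansion. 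First I would make this base-point-agnosticism precise: I would re-read the argument proving Theorem \ref{thm:ThetaI} and confirm that the generalized Whittaker theory producing the functions $W_{f,4+m}$, the cuspidality argument, and the determination of the weight $4+m$ never invoke a property of $I$ beyond these two, so that the argument applies verbatim to any integral base point $U \in J_R$ with $N_J(U) = 1$ and $(\,,\,)_U$ positive-definite.

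Next I would produce the $E$-singular vectors and check that the auxiliary data transforms as required. Since $\delta E = I$, conjugation by $\delta$ carries $\mathrm{Stab}(E)$ isomorphically onto $\mathrm{Stab}(I) = F_4^c$, and, because $(\delta u, \delta v)_I = (u,v)_E$, it carries $J^0_E$, the orthogonal complement of $E$ under $(\,,\,)_E$, onto $J^0 = I^\perp$. By the $M_J^1$-equivariance of the construction $X \wedge Y \mapsto \Phi_{X \wedge Y}$, the $273$-dimensional $\mathrm{Stab}(E)$-representation is then identified with $V_1$, so if $X \wedge Y$ is a highest weight vector of $V_1$ for a Borel of $F_{4,K}^c$, then $X_E \wedge Y_E = \delta^{-1}(X \wedge Y)$ is a highest weight vector for the $\delta^{-1}$-conjugated Borel of $\mathrm{Stab}(E)_K$; that is, $X_E$ and $Y_E$ are genuinely $E$-singular, placing us in the setting needed for the general theorem. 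I would then record the intertwining identities that make the stated $E$-expansion come out correctly: using that $\delta$ preserves the trilinear form and acts contragrediently on $J^\vee$, one gets $(\delta u, \delta v)_I = (u,v)_E$, $pr_I(\delta \cdot w) = pr_E(w)$, and hence $P_{m,I}(\delta \cdot w; X, Y) = P_{m,E}(w; X_E, Y_E)$ whenever $X = \delta X_E$ and $Y = \delta Y_E$.

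The main obstacle, and the reason the deduction is subtler than it first appears, is that $\delta$ lies only in $M_J^1(\Q)$ and not in $M_J^1(\Z)$: it preserves neither the lattice $W_J(\Z)$ nor the quantity $d_w$. Consequently one cannot simply reindex the $I$-Fourier expansion by $w \mapsto \delta^{-1} w$ to turn it into the $E$-expansion, since that would replace the sum over $W_J(\Z)^{rk=1}$ by a sum over the incommensurable lattice $\delta^{-1} W_J(\Z)$, and would replace $d_w$ by $d_{\delta w}$. In particular $\Theta_E(X_E, Y_E; m)$ is a genuinely different quaternionic modular form from $\Theta_I(X, Y; m)$, with a different Fourier expansion over the same lattice, so the $\delta$-intertwining must be used only to transport the abstract input data, namely the base point and the singular vectors, and not at the level of the lattice sum. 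The real work is therefore precisely the base-point-agnosticism of the first paragraph: once it is confirmed that the argument of \cite{pollackETF} goes through for any integral base point with unit norm and positive-definite form, specializing to $E$ with the $E$-singular vectors $X_E, Y_E$ yields the stated cuspidal quaternionic modular form of weight $4+m$ together with its Fourier expansion.
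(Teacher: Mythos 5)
Your proposal is not a proof of Theorem \ref{thm:ThetaE} but a plan whose decisive step is deferred, and that step is exactly where the content lies. Note first that this paper does not prove the theorem at all: it is imported from \cite{pollackETF} (hence ``See \cite{pollackETF}'' in the header), where Theorems \ref{thm:ThetaI} and \ref{thm:ThetaE} are not proved one from the other but arise simultaneously from a single adelic theta-lift computation --- level-one algebraic modular forms on $F_4^c$ are functions on the two-element class set of $F_4^c$, whose classes correspond to the two pointed lattices $(J_R,I)$ and $(J_R,E)$ of \cite{elkiesGrossIMRN}, and the two theorems are the Fourier expansions of lifts supported on the respective classes. Your route (bootstrap the $E$-statement from the $I$-statement by transport of structure) is therefore different from the cited argument, and its core claim --- that the proof of Theorem \ref{thm:ThetaI} uses only $N_J(I)=1$, positive-definiteness of $(\,,\,)_I$, and $I\in J_R$ --- is something you cannot verify blind and, as stated, is false as a sufficient list.

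Concretely, two inputs are missing. First, for the resulting form to be level one on $G_2$ and for the expansion to run over $W_J(\Z)$ with multiplier $\sigma_4(d_w)$, one needs the pair $(J_R\otimes \Z_p, E)$ to be isomorphic to $(J_R\otimes\Z_p, I)$ for every prime $p$ --- equivalently, $\mathrm{Stab}(E)$ with its natural integral structure must be hyperspecial at all finite places. This genus condition is true (it can be deduced from smoothness of the stabilizer group scheme together with Lang's theorem and Hensel's lemma, or quoted from Elkies--Gross), but it is a theorem, not a formal consequence of your three global properties, and your proposal never identifies it. Second, replacing $I$ by $E$ does not only change the pairing and the projection: the construction realizes the split $G_2$ as the centralizer of $\mathrm{Stab}(I)$ inside the ambient exceptional group, so running ``the same proof'' with base point $E$ produces a modular form on the conjugate subgroup $\delta^{-1}G_2\delta$, not on the standard $G_2$. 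To land on the standard $G_2$ with its standard level-one structure and the standard completions $W_{f,4+m}$, one must conjugate back by $\delta$ and check that integral structures are preserved, which works only because of the local equivalence above (locally $\delta$ factors as an integral element times an element of $F_4^c(\Q_p)$ centralizing $G_2$). This is precisely a use of $\delta$ interacting with the lattice-level bookkeeping, which your final paragraph explicitly disclaims; so your (correct and worthwhile) observations --- that naive reindexing fails because $\delta\notin M_J^1(\Z)$, and that $(\delta u,\delta v)_I=(u,v)_E$, $pr_I(\delta w)=pr_E(w)$, $P_{m,I}(\delta w;X,Y)=P_{m,E}(w;X_E,Y_E)$ --- set up the right dictionary but do not close the argument.
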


We can now explain the proof of Theorem \ref{thm:QMF}, which uses our SAGE implementation to compute finitely many Fourier coefficients of the $\Theta_{E}(X_E,Y_E;m)$.
\begin{proof}[Proof of Theorem \ref{thm:QMF}] Let $m = 5$ or $m=7$.  Computing a single Fourier coefficient of $\Theta_{E}(X_E,Y_E;m)$ for a somewhat randomly chosen $X_E,Y_E$, one sees that these quaternionic modular forms are nonzero.  Thus, by Dalal's dimension formula \cite{dalal}, these quaternionic modular forms must be $F_9$ and $F_{11}$.  It is proved in \cite{pollackETF} that, for each $m$, the span of the $\Theta_I(X,Y;m)$ and the $\Theta_{E}(X_E,Y_E;m)$ has a basis consisting of cusp forms with integral Fourier coefficients.  Consequently, $F_{9}$ and $F_{11}$ can be normalized to have Fourier coefficients in $\Z$.  It is always true that all Fourier coefficients of $\Theta_E(X_E, Y_E;m)$ corresponding to cubic rings $S = \Z \times B$ are $0$; this follows from \cite[Proposition 5.5]{elkiesGrossIMRN}. This concludes the proof.
\end{proof}

Fix an integral binary cubic form $f(u,v) = au^3+bu^2v+ cuv^2+ dv^3$.  In order for the above Theorems to make sense, one needs to know that the set of rank one $w \in W_J(\Z)$ with $pr_I(w) = f$ is finite; likewise for $pr_E(w)$.  This is true.  To make the SAGE implementation work, we need an explicit form of this finiteness, which we give in the following lemma for the case when the binary cubic form $f$ is monic, i.e., when $a=1$.

\begin{lemma} Suppose $f(u,v)$ is as above, with $a=1$.  Let $\Omega_{f,I} = \{w \in W_J(\Z): \mathrm{rk}(w)=1, pr_I(w) = f\}$ and likewise for $\Omega_{f,E}$.  Then 
\[\Omega_{f,I}=\{(1,T,T^\#,n_J(T)): (T,I^\#) = b, (T^\#,I) = c, n_J(T) =d\}\]
and
\[\Omega_{f,E}=\{(1,T,T^\#,n_J(T)): (T,E^\#) = b, (T^\#,E) = c, n_J(T) =d\}.\]
Moreover, if $(1,T, T^\#, n_J(T)) \in \Omega_{f,I}$, then $(T,T)_I = b^2 - 2c$.  Likewise, if  $(1,T, T^\#, n_J(T)) \in \Omega_{f,E}$, then $(T,T)_E = b^2 - 2c.$
\end{lemma}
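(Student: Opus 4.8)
The plan is to reduce the entire statement to the standard rank-one characterization in the Freudenthal space $W_J$ together with the basic identities of the cubic norm structure recalled in Subsection \ref{subsec:excAlgJ}. Recall that a nonzero $w = (a,b,c,d) \in W_J$ is rank one exactly when $b^\# = ac$, $c^\# = db$, and $(b,c) = 3ad$; these quadratic equations cut out the minimal orbit and are intrinsic to $W_J$, in particular independent of the auxiliary unit $I$ or $E$. I would record this characterization first, since it is the one input not already present in the excerpt, and note that it is the $W_J$-analogue of the pointwise rank-one equations for $J$ used in Lemma \ref{lem:Trk1}.

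First I would impose $a=1$ and set $T := b \in J$. The equation $b^\# = ac$ then reads $c = T^\#$. Pairing $T$ against its own sharp and using $(Z,X^\#) = \frac12(Z,X,X)$ together with $(X,X,X) = 6N_J(X)$ gives $(T,T^\#) = 3N_J(T)$, so the relation $(b,c) = 3ad$ forces $d = N_J(T) = n_J(T)$. The remaining equation $c^\# = db$ becomes $(T^\#)^\# = n_J(T)\,T$, which is precisely the adjoint identity $(X^\#)^\# = N_J(X)X$; it therefore holds automatically and imposes no new constraint. Hence every rank-one $w$ with $a=1$ has the form
\[ (1,\,T,\,T^\#,\,n_J(T)), \]
and conversely any such tuple satisfies all three rank-one equations by the same identities. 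Because $pr_I$ and $pr_E$ differ only by replacing $(\,\cdot\,,I^\#)$ and $(\,\cdot\,,I)$ with $(\,\cdot\,,E^\#)$ and $(\,\cdot\,,E)$, while the rank-one condition is identical for both, this single computation yields both displayed descriptions once the coefficients of $f$ are matched.

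For the integral bookkeeping I would observe that membership in $W_J(\Z)$ amounts to $T \in J_R$, and that $J_R$ is stable under the sharp map into $J_R^\vee$ and has $n_J$ integer-valued: indeed, for $T = [c_1,c_2,c_3;x_1,x_2,x_3]$ with $c_i \in \Z$ and $x_i \in R$, the explicit formula for $\iota(T^\#)$ in Subsection \ref{subsec:excAlgJ} has entries built from $c_ic_j$, $n_{\O}(x_i)$, conjugates, and products $(x_jx_k)^*$, all of which remain in $\Z$ or $R$ since $R$ is an order; likewise $N_J(T)$ is an integer. Thus no condition beyond $T \in J_R$ is needed, and the remaining equalities $(T,I^\#)=b$, $(T^\#,I)=c$, $n_J(T)=d$ (respectively with $E$) are exactly the assertion $pr_I(w)=f$ (respectively $pr_E(w)=f$) for the monic form $f$.

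Finally, the two quadratic-form evaluations are a one-line substitution into the definition $(X,Y)_U = (X,U^\#)(Y,U^\#) - (U,X,Y)$. Taking $X=Y=T$ and $U=I$ gives $(T,T)_I = (T,I^\#)^2 - (I,T,T)$; using $(I,T,T) = 2(I,T^\#) = 2(T^\#,I)$ and the defining equalities $(T,I^\#)=b$, $(T^\#,I)=c$ yields $(T,T)_I = b^2 - 2c$, and the identical computation with $U=E$ gives $(T,T)_E = b^2 - 2c$. The only genuinely non-formal point I anticipate is the integrality of the sharp map and of $n_J$ on $J_R$; everything else is direct substitution into the rank-one equations and the adjoint identity, so I expect the proof to be short.
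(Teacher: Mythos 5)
Your proposal is correct and follows exactly the route the paper intends: the paper gives no detailed argument, saying only that the lemma ``is an easy consequence of properties of rank one elements (see, for instance, \cite{pollackNotes}) and the definitions,'' and your write-up is precisely that argument spelled out --- the rank-one equations $b^\# = ac$, $c^\# = db$, $(b,c) = 3ad$ in $W_J$, the adjoint identity $(T^\#)^\# = N_J(T)T$, integrality of $T \mapsto T^\#$ and $N_J$ on $J_R$, and the substitution $(T,T)_U = (T,U^\#)^2 - 2(T^\#,U)$. No gaps; your verification that the rank-one condition is intrinsic (independent of $I$ versus $E$) and the trace identity $(T,T^\#) = 3N_J(T)$ are exactly the ``properties of rank one elements'' being invoked.
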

Note that the lemma implies the finiteness in an explicit way, because the quadratic forms $(T,T)_I$ and $(T,T)_E$ are positive-definite.  The lemma is an easy consequence of properties of rank one elements (see, for instance, \cite{pollackNotes}) and the definitions.

The SAGE file \texttt{g2\_motives.sage} includes the function \texttt{Dalal\_dim\_k(k)}, that takes Dalal's explicit formula \cite{dalal} for the dimension of the space of level one QMFs on $G_2$ of weight $k \geq 3$ and puts it into the computer.  The smallest weight in which there is a nonzero cusp form, according to Dalal's formula, is in weight $k = 6$.  In this case, the space of weight $6$ level one quaternionic modular forms is one-dimensional, spanned by an element $F_6$.  Computing with the SAGE code, one finds the $F_6$ can be normalized to have the following Fourier coefficients.  (The computation took about one hour on my laptop.). In the table, the ordered $3$-tuple is the $(b,c,d)$ of the monic binary cubic $f(u,v) = u^3 + bu^2 v + cu^2v + dv^3$.

\[
\begin{array}{|c|c|}\hline f(u,v) = u^3 + bu^2 v + cu^2v + dv^3 & a(f) \\ \hline
(0,-3,-1) & 48600\\
(0,-3,0) & 1620\\
(0,-2,-1) & 15\\
(0,-2,0) & 1680\\
(0,-1,0) & -7\\
(1,-3,-3) & -10080\\
(1,-3,-2) & 25575\\
(1,-3,-1) & 28800\\
(1,-3,0) & -1485\\
(1,-2,-2) & -30\\
(1,-2,-1) & 12600\\
(1,-2,0) & -63\\
\hline
\end{array}
\]

\subsection{SAGE implementation: $(J_R,I)$}

To make the formulas used in Theorem \ref{thm:ThetaI} explicit, so that it can put into SAGE, one uses the following easy lemma.
\begin{lemma} Let $T = [u_1,u_2,u_3,v_1,v_2,v_3]$, $X = [x_0,x_1,x_2;x_3,x_4;x_5]$ and $Y=[y_0,y_1,y_2;y_3,y_4,y_5]$.
\begin{enumerate}
\item $(T,T)_I = u_1^2 + u_2^2 + u_3^2 + 2 n(v_1) + 2n(v_2) +2n(v_3).$
\item $(T,I^\#)= u_1 + u_2 + u_3$
\item $(T^\#,I)= u_1 u_2+u_2u_3 + u_3 u_1 - n(v_1)-n(v_2) - n(v_3)$
\item $\det(T) = u_1 u_2 u_3 - u_1 n(v_1) - u_2 n(v_2) - u_3 n(v_3) + (v_1, v_2, v_3)_{\O}$.
\item $(X,T)_I = x_0 u_1 + x_1 u_2 + x_2 u_3 + (v_1,x_3) + (v_2,x_4) + (v_3,x_5)$
\item $(X,T^\#) = A_1 - A_2 + A_3$ where 
\begin{enumerate}
\item $A_1 = x_0 (u_2 u_3 - n(v_1)) + x_1 (u_3 u_1 - n(v_2)) + x_2 (u_1 u_2 - n(v_3))$
\item $ A_2 = u_3 (x_5,v_3) + u_2 (x_4, v_2) + u_1 (x_3,v_1)$
\item $A_3 = (x_3, v_2, v_3) + (x_4, v_3, v_1) + (x_5, v_1, v_2)$.
\end{enumerate}
\end{enumerate}
and one has similar formulas for $(Y,T)_I$ and $(Y,T^\#)$.
\end{lemma}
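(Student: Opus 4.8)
The plan is to treat all six identities as direct unwindings of the definitions collected in subsection~\ref{subsec:excAlgJ}, grouped according to how much genuinely octonionic manipulation each requires. First I would record the one scalar fact used repeatedly, namely $(v,v)_\O = 2n_\O(v)$, which is immediate from $(x,y)_\O = n_\O(x+y)-n_\O(x)-n_\O(y)$. With this in hand, items (1), (4), and (5) are purely formal. Identity (4) is literally the definition of $N_J(T)$ with $c_i = u_i$ and $x_i = v_i$. Identity (5) is the explicit formula for $(\,,\,)_I$ recalled in subsection~\ref{subsec:excAlgJ}, applied to the pair $X,T$ with $X=[x_0,x_1,x_2;x_3,x_4,x_5]$. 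Identity (1) is that same formula applied to $T$ with itself, replacing each $(v_i,v_i)_\O$ by $2n_\O(v_i)$.

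For items (2), (3), and (6) the key observation is that the natural duality pairing between $J$ and $J^\vee$ is computed through $(\,,\,)_I$ via the identification $\iota$: the displayed formula for $\iota(X^\#)$ in subsection~\ref{subsec:excAlgJ} is precisely the element $W\in J$ with $(Z,X^\#) = (Z,W)_I$ for all $Z\in J$. I would first specialize this to $X=I$, where every off-diagonal octonion vanishes and the diagonal is $(1,1,1)$, obtaining $\iota(I^\#) = I$; pairing against $T$ via $(\,,\,)_I$ then yields (2). For (3) I would instead take $W=\iota(T^\#)$ and pair it against $I$: since $I$ has trivial off-diagonal part, only the three diagonal entries $u_ju_k - n_\O(v_i)$ of $\iota(T^\#)$ survive, giving $u_1u_2+u_2u_3+u_3u_1 - n_\O(v_1)-n_\O(v_2)-n_\O(v_3)$.

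Item (6) is the only one requiring genuine octonion algebra, and is where I expect the real work to lie. Writing $(X,T^\#) = (X,\iota(T^\#))_I$ and expanding with the displayed formula for $\iota(T^\#)$, the three diagonal entries of $\iota(T^\#)$ pair with $x_0,x_1,x_2$ to produce $A_1$, while the $-u_iv_i$ parts of the off-diagonal entries contribute $-u_1(x_3,v_1)_\O - u_2(x_4,v_2)_\O - u_3(x_5,v_3)_\O = -A_2$. The remaining contribution is $(x_3,(v_2v_3)^*)_\O$ together with its two cyclic analogues, so it suffices to prove the octonionic identity $(x,(yz)^*)_\O = (x,y,z)$. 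This is the crux of the whole lemma. Unwinding $(x,(yz)^*)_\O = \tr_\O\!\big(x^*(yz)^*\big)$, rewriting $x^*(yz)^* = \big((yz)x\big)^*$ via the rule $(pq)^* = q^*p^*$, applying the $*$-invariance $\tr_\O(w^*)=\tr_\O(w)$, and finally recalling $(y,z,x) = \tr_\O\!\big((yz)x\big)$ together with the cyclic symmetry $(y,z,x)=(x,y,z)$ gives the claim. Substituting this identity into the three octonionic terms produces exactly $A_3$, completing (6). The stated formulas for $(Y,T)_I$ and $(Y,T^\#)$ then follow by replacing each $x_\bullet$ by the corresponding $y_\bullet$, since no property specific to $X$ was used.
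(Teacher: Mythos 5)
Your proof is correct, and it is exactly the verification the paper intends: the paper offers no proof at all (the lemma is introduced as an ``easy lemma'' following from the definitions), so your write-up simply supplies the omitted details. Every step checks out, including the one nontrivial point you correctly isolate, the octonionic identity $(x,(yz)^*)_{\O} = (x,y,z)$, which together with the displayed formulas for $(\,,\,)_I$ and $\iota(T^\#)$ yields item (6), while items (1)--(5) are immediate unwindings as you say.
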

Now every piece of the computation of $P_{m,I}((1,T,T^\#,n_J(T));X,Y)$ is completely explicit, including how to find all $T$ with $pr_{I}((1,T,T^\#,n_J(T))) = f(u,v)$.  Indeed, to find all such $T$, it suffices to find all $u_1, u_2, u_3 \in \Z$ and $v_1, v_2, v_3 \in R$ with $u_1^2 + u_2^2 + u_3^2 + 2 n(v_1) + 2n(v_2) +2n(v_3) = b^2 -2c$.  Because this quadratic form is visibly decomposable, to find such $u_i$ and $v_j$, one can do it piecewise for the quadratic forms $x \mapsto x^2$ for $x \in \Z$ and $v \mapsto n_{\O}(v)$ for $v \in R$.  This fact leads to our implementation to compute the Fourier coefficients of $\Theta_I$ being faster than its $\Theta_E$ counterpart.

To find various suitable singular pairs $X,Y$ to use as inputs, the code has implemented the exponential of the elements $\Phi'_{u\wedge v}$ of Lemma \ref{lem:F4nilp}.

\subsection{SAGE implementation: $(J_R,E)$}
To implement the formulas in Theorem \ref{thm:ThetaE}, one uses the following straightforward lemma:
\begin{lemma}\label{lem:TEformulas} Suppose $T = [c_1, c_2, c_3; x_1, v_2, v_3]$.  Then $(E^\#, T) = 2(c_1+c_2+c_3) +(\beta^*,x_1 + v_2+v_3)$ and
\begin{align*} (E,T^\#) &= 2(c_1 c_2 + c_2 c_3 + c_3 c_1 - n(x_1) - n(v_2) - n(v_3)) - (\beta, c_1 x_1 + c_2 v_2 + c_3 v_3) \\ &\,\,+ (\beta, v_2, v_3) + (\beta, v_3, x_1) + (\beta, x_1, v_2).\end{align*}
Moreover, the value $(T,X)_E$ can be computed  using the Gram matrix for the quadratic form $(T,T)_E$.
\end{lemma}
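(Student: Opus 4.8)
The plan is to reduce both pairings to the positive-definite inner product $(\,,\,)_I$ via the identification $\iota$, and then substitute the explicit $\#$-formulas recorded in subsection~\ref{subsec:excAlgJ}. Recall that $\iota$ sends a dual vector to the element of $J$ representing it under $(\,,\,)_I$, so that $(E^\#,T)=(T,\iota(E^\#))_I$ and $(E,T^\#)=(E,\iota(T^\#))_I$. Thus the whole computation is bilinear algebra once two small octonionic identities are in place.

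First I would establish the auxiliary identity $\iota(E^\#)=[2,2,2;\beta^*,\beta^*,\beta^*]$. Plugging $E=[2,2,2;\beta,\beta,\beta]$ into the displayed formula for $\iota(X^\#)$ gives diagonal entries $4-n_{\O}(\beta)=2$, using $n_{\O}(\beta)=2$, and off-diagonal entries $(\beta^2)^*-2\beta$. Since $\beta^2=-\beta-2$ and $\beta^*=-1-\beta$, one finds $(\beta^2)^*=\beta-1$, whence $(\beta^2)^*-2\beta=-1-\beta=\beta^*$. Feeding $\iota(E^\#)$ and $T=[c_1,c_2,c_3;x_1,v_2,v_3]$ into the explicit formula for $(\,,\,)_I$, and using the bilinearity and symmetry of the octonion pairing, yields the first displayed identity $(E^\#,T)=2(c_1+c_2+c_3)+(\beta^*,x_1+v_2+v_3)$ immediately.

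For the second identity I would substitute the explicit expression for $\iota(T^\#)$ into $(E,\iota(T^\#))_I$. The diagonal part of $E$ contributes $2\bigl(c_1c_2+c_2c_3+c_3c_1-n(x_1)-n(v_2)-n(v_3)\bigr)$, and the off-diagonal part produces $-(\beta,c_1x_1+c_2v_2+c_3v_3)$ together with $(\beta,(v_2v_3)^*)_{\O}+(\beta,(v_3x_1)^*)_{\O}+(\beta,(x_1v_2)^*)_{\O}$. The one step that is not pure bookkeeping is rewriting these last three terms: from $(x,y)_{\O}=\tr_{\O}(xy^*)$ and $(z^*)^*=z$ one obtains $(\beta,(ab)^*)_{\O}=\tr_{\O}(\beta(ab))=(\beta,a,b)$, which converts them into $(\beta,v_2,v_3)+(\beta,v_3,x_1)+(\beta,x_1,v_2)$ and completes the formula. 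The final assertion about $(T,X)_E$ is then immediate: a symmetric bilinear form is recovered from its quadratic form by polarization, so $(T,X)_E$ can be read directly off the hard-coded Gram matrix of $(\,,\,)_E$.

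As for difficulty, there is no genuine obstacle here: everything reduces to the two elementary facts above, namely the evaluation $\iota(E^\#)=[2,2,2;\beta^*,\beta^*,\beta^*]$ and the conjugation identity $(\beta,(ab)^*)_{\O}=(\beta,a,b)$. The only point requiring care is keeping the octonion products in the correct (non-commutative, non-associative) order when matching the $\#$-formula against the trilinear form; but no associativity is actually invoked, because the trilinear form $(\,,\,,\,)_{\O}$ is totally symmetric, so the cyclic rewriting of $\tr_{\O}(\beta(ab))$ is unambiguous.
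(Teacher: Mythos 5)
Your proof is correct: the identity $\iota(E^\#)=[2,2,2;\beta^*,\beta^*,\beta^*]$, the conversion $(\beta,(ab)^*)_{\O}=\tr_{\O}(\beta(ab))=(\beta,a,b)$, and the final polarization remark all check out against the paper's definitions of $\iota$, $(\,,\,)_I$, and the explicit formula for $\iota(X^\#)$. The paper states this lemma without proof (calling it straightforward), and your direct unwinding of the definitions is exactly the intended argument, so there is nothing further to compare.
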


\begin{remark} The $x_1$ in the above lemma is not a typo; we have called that octonion $x_1$, instead of $v_1$, so that the lemma is similar to the variables used in the SAGE code (which, unfortunately, are not completely parallel.)\end{remark}

To compute the $f(u,v)$ Fourier coefficient of $\Theta_E(X_E,Y_E;m)$, one first finds all $T \in J_R$ with $(T,T)_E = b^2-2c$.  This uses SAGE's \texttt{short\_vector\_list\_up\_to\_length} function.  Then, having found all such $T = [c_1, c_2, c_3; x_1, x_2, x_3]$, we group them by those that have the same $c_1, c_2, c_3$ and $x_1$.  This allows some of the computation implicit in Lemma \ref{lem:TEformulas} that is identical for multiple $T$ to be done once, instead of repeatedly.  SAGE computes each $P_{m,E}(w;X_E,Y_E)$ and sums up the results.

To find various suitable singular pairs $X_E, Y_E$ one again uses the exponential of the elements $\Phi'_{u\wedge v}$ of Lemma \ref{lem:F4nilp}, together with the element $\delta_E^{-1}$, which is also implemented in SAGE.

\bibliography{nsfANT2020new}
\bibliographystyle{amsalpha}
\end{document}